\newtheorem{thm}{Theorem}[section]
\newtheorem{cor}[thm]{Corollary}
\newtheorem{lem}[thm]{Lemma}
\newtheorem{prop}[thm]{Proposition}
\newtheorem{conj}[thm]{Conjecture}
\theoremstyle{definition}
\newtheorem{defn}[thm]{Definition}
\newtheorem{rmk}[thm]{Remark}
\DeclareMathOperator{\Hom}{Hom}
\newcommand{\C}{\ensuremath\mathds{C}}
\newcommand{\R}{\ensuremath\mathds{R}}
\newcommand{\Z}{\ensuremath\mathds{Z}}
\newcommand{\PP}{\ensuremath\mathds{P}}
\newcommand{\calO}{\ensuremath\mathcal{O}}
\newcommand{\HH}{\ensuremath\mathrm{H}}
\newcommand{\Set}[2]{\left\{#1 :#2\right\}}
\begin{document}

\title{Vanishing cycles under base change and the integral Hodge conjecture}
\author{Mingmin Shen}

\thanks{2010 {\em Mathematics Subject Classification.} }

\thanks{{\em Key words and phrases.} integral Hodge conjecture, vanishing cycles}

\address{
KdV Institute for Mathematics, University of Amsterdam, P.O.Box 94248, 1090 GE Amsterdam, Netherlands}
\email{M.Shen@uva.nl}

\date{\today}

\begin{abstract} 
In this paper we discuss an obstruction to the integral Hodge conjecture, which arises from certain behavior of vanishing cycles. This allows us to construct new counter-examples to the integral Hodge conjecture. One typical such counter-example is the product of a very general hypersurface of odd dimension and an Enriques surface. Our approach generalizes the degeneration argument of Benoist--Ottem \cite{bo}.
\end{abstract}

\maketitle

\section{Introduction}

In this paper, we work over the field $\C$ of complex numbers. Let $X$ be a smooth projective variety, then the cohomology group of $X$ carries a Hodge structure given by
\[
\HH^k(X,\Z)\otimes \C = \bigoplus_{p+q=k} \HH^{p,q}(X), \qquad \overline{\HH^{p,q}(X)} = \HH^{q,p}(X). 
\]
The group of integral Hodge classes, denoted $\mathrm{Hdg}^{2p}(X,\Z)$, consists of all the elements $\alpha\in \HH^{2p}(X,\Z)$ such that $\alpha\otimes 1 \in \HH^{2p}(X,\Z)\otimes \C$ is in the summand $\HH^{p,p}(X)$. One easily sees that the torsion classes are all integral Hodge classes, \textit{i.e.}
\[
 \HH^{2p}(X,\Z)_{\mathrm{tor}} \subseteq \mathrm{Hdg}^{2p}(X,\Z).
\]
W.~Hodge discovered that the cohomology class $[Z]$ of an algebraic cycle $Z$ on $X$ is always an integral Hodge class.

\begin{conj}[Integral Hodge Conjecture]
Every integral Hodge class is the cohomology class of an algebraic cycle.
\end{conj}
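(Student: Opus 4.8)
The plan is to split the statement into its rational content and an integral refinement, and to establish the rational content by reduction to the two extreme codimensions where the conclusion is unconditional. I would first treat $p=1$. Here the exponential sequence $0\to\Z\to\calO_X\to\calO_X^\ast\to 0$ identifies $\Pic(X)$ with $\HH^1(X,\calO_X^\ast)$, and its connecting homomorphism $c_1\colon\Pic(X)\to\HH^2(X,\Z)$ has image exactly $\mathrm{Hdg}^2(X,\Z)$: a class $\alpha$ lifts to $\Pic(X)$ iff its image in $\HH^2(X,\calO_X)$ vanishes, and since $\HH^2(X,\calO_X)=\HH^{0,2}(X)$, this vanishing is precisely the condition $\alpha\otimes 1\in\HH^{1,1}(X)$. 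As every line bundle is $\calO_X(D)$, this is the Lefschetz $(1,1)$ theorem: all of $\mathrm{Hdg}^2(X,\Z)$ is algebraic. At the opposite extreme $p=\dim X$ the group $\HH^{2p}(X,\Z)\cong\Z$ is generated by the class of a point, so the cycle class map from $\CH_0(X)$ is surjective and there is nothing further to check.

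To bridge the two ends I would induct on codimension using the Lefschetz package. Given $\alpha\in\mathrm{Hdg}^{2p}(X,\Z)$ with $1<p<\dim X$, choose a smooth hyperplane section $i\colon Y\hookrightarrow X$. The Lefschetz hyperplane theorem makes $i^\ast\colon\HH^k(X,\Z)\to\HH^k(Y,\Z)$ an isomorphism for $k<\dim Y$ and an injection for $k=\dim Y$, and it preserves Hodge type; hard Lefschetz furnishes, rationally, a Lefschetz decomposition through which a Hodge class on $X$ is expressed in terms of primitive classes cut out on hyperplane sections. The aim is to realize $\alpha$ by classes pushed forward from such sections, so that the inductive hypothesis on the lower-dimensional $Y$ supplies an algebraic representative and the Gysin map $i_\ast\colon\CH^{p-1}(Y)\to\CH^p(X)$ returns a cycle on $X$. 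Feeding in the rational Hodge conjecture as the remaining input, this produces, for some positive integer $N$, an algebraic cycle $Z$ with $[Z]=N\alpha$ in $\HH^{2p}(X,\Z)$.

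The hard part --- the step on which the whole proof turns --- is the integral refinement: passing from $N\alpha$ to $\alpha$, equivalently showing that the cycle class map $\CH^p(X)\to\mathrm{Hdg}^{2p}(X,\Z)$ is surjective and not merely surjective after tensoring with $\Q$. The device I would try is to enlarge the supply of cycles until their classes generate the full integral Hodge group rather than a finite-index subgroup: using the projection formula, a collection of auxiliary cycles $W_j$ whose intersection numbers with the $Z$'s have greatest common divisor $1$ would clear the denominator. What this really requires is the vanishing of the cokernel of the cycle class map, and controlling that cokernel uniformly is the crux of the entire argument. It is exactly here that the finest cohomological information must be brought in --- the torsion detected by Steenrod operations in the Atiyah--Hirzebruch spectral sequence, and, as developed in the body of this paper, the behaviour of vanishing cycles under base change --- so completing the proof amounts to showing that these contributions to the cokernel all vanish.
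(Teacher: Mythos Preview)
The statement you are trying to prove is a \emph{conjecture}, not a theorem, and it is in fact \emph{false}. The paper does not prove it; quite the opposite, the entire paper is devoted to constructing new counter-examples to it. The very next sentence after the statement reads: ``It is known since Atiyah--Hirzebruch \cite{ah} that the integral Hodge conjecture is false.'' There is therefore no proof in the paper to compare your proposal against.

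Your own write-up locates the failure precisely. The rational part (Lefschetz $(1,1)$, the case of points, and the inductive scheme feeding in the rational Hodge conjecture) at best reduces matters to what you call the ``integral refinement'': showing that the cokernel of the cycle class map $\CH^p(X)\to\mathrm{Hdg}^{2p}(X,\Z)$ vanishes. You correctly flag this as ``the step on which the whole proof turns,'' and then propose to ``enlarge the supply of cycles'' until their classes generate the full integral group. But this cokernel is \emph{not} zero in general --- Atiyah--Hirzebruch produced torsion Hodge classes killed by Steenrod operations that cannot be algebraic, and the present paper exhibits further torsion classes (e.g.\ $\alpha\otimes u\in\HH^{n+3}(X\times S,\Z)$ for $X$ a very general odd-dimensional hypersurface and $S$ an Enriques surface) which are provably not algebraic. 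So the very cohomological inputs you invoke in your last paragraph are obstructions, not tools for completing the argument: they witness that the cokernel is nontrivial, and hence that no gcd-clearing trick with auxiliary cycles $W_j$ can succeed.

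In short: there is no gap to repair --- the target statement is false, and your proposal necessarily breaks at the integral refinement step.
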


It is known since Atiyah--Hirzebruch \cite{ah} that the integral Hodge conjecture is false. Since then, many theories and techniques were developed to construct more counter-examples. In the recent paper \cite{bo}, Benoist and Ottem used a degeneration argument to show that certain integral Hodge class is not algebraic. In this paper, we generalize their method to produce more counter-examples.

Our method is based on the following simple observation. Let $Y$ be a smooth projective variety and $U\subseteq Y$ a dense open subvariety. If $Z_U$ is an algebraic cycle on $U$, then it extends to an algebraic cycle $Z$ on $Y$ by taking the closure. However, a (locally finite) toplological cycle $z_U$ on $U$ does not necessarily extend to one on $Y$. The main reason is that the closure of $z_U$ might have a nontrivial boundary. Hence a cohomology class being algebraic imposes stronger extension property on the class. We make the following definition to make the discussion easier.

\begin{defn}
Let $\pi: \mathcal{X}\rightarrow B$ be a flat projective morphism between smooth complete varieties. Let $0\in B$ be a closed point such that $X=\mathcal{X}_0:=\pi^{-1}0$ is a smooth fiber. Let $\alpha\in \HH^{k}(X,R)$ be a cohomology class with coefficients in a commutative ring $R$. We say that $\alpha$ is \textit{extendable} if the following hold. 
\begin{itemize}
\item There exists a smooth complete variety $\tilde{B}$ together with a generically finite morphism $\tilde{B}\rightarrow B$.
\item For some resolution $\tilde{\mathcal{X}}$ of $\mathcal{X}':=\mathcal{X}\times_B\tilde{B}$ and some preimage $\tilde{0}\in\tilde{B}$ of $0$, we have $X = \tilde{\mathcal{X}}_{\tilde{0}}:=\tilde\pi^{-1}(\tilde{0})$ where $\tilde{\pi}: \tilde{\mathcal{X}} \rightarrow \tilde{B}$ is the morphism induced by $\pi$.
\item There exists a cohomology class $\tilde{\alpha}\in \HH^k(\tilde{\mathcal{X}}, R)$ such that $\alpha = \tilde{\alpha}|_X$.
\end{itemize} 
\end{defn}

\begin{rmk}
Assume that $0\in B$ is a very general point. If $\alpha \in \HH^{2p}(X,\Z)$ is the class of an algebraic cycle $Z$, then $\alpha$ is extendable. Indeed, one can identify $X=\pi^{-1}0$ with the geometric generic fiber $\mathcal{X}_{\bar\eta}$. The algebraic cycle $Z$ can then be defined over a finite extension of $\eta_B$. Then a standard argument shows that there exists some generically finite morphism $\tilde{B}\rightarrow B$ and an algebraic cycle $\mathcal{Z}'$ on ${\mathcal{X}}':=\mathcal{X}\times_B\tilde{B}$ such that $[\mathcal{Z}'_{\tilde{0}}] = \alpha$. The strict transform $\tilde{\mathcal{Z}}$ of $\mathcal{Z}'$ in the resolution $\tilde{\mathcal{X}}$ sastisfies $[\tilde{\mathcal{Z}}_{\tilde{0}}] = \alpha$. Thus we can simply take $\tilde{\alpha} = [\tilde{\mathcal{Z}}]$.
\end{rmk}

This remark gives rise to the following non-algebraicity criterion.

\vspace{3mm}
\noindent \textbf{(Non-algebraicity criterion)} \textit{If an integral Hodge class $\alpha$ is not extendable, then this class is not algebraic on a very general fiber.}
\vspace{3mm}

Our first main result is the following non-extendability of vanishing cycles on odd dimensional smooth hypersurfaces.

\begin{thm}[Theorem \ref{thm non-ext}]
Let $\mathcal{X}\longrightarrow B=\PP^1$ be a Lefschetz pencil of smooth hypersurfaces of odd dimension $n$. Let $X=\pi^{-1}0$ be a smooth fiber. Then every non-zero element $\alpha\in \HH^n(X,R)$ is non-extendable, where $R$ is a nonzero commutative ring.
\end{thm}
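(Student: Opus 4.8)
The plan is to reduce the whole statement to the vanishing of a single Gysin map together with one local topological computation. Suppose for contradiction that a nonzero $\alpha\in\HH^n(X,R)$ is extendable, and fix the corresponding data: a generically finite $\tilde{B}\to B$, a resolution $\tilde{\mathcal{X}}$ of $\mathcal{X}':=\mathcal{X}\times_B\tilde{B}$, a point $\tilde{0}$ over $0$ with $X=\tilde{\mathcal{X}}_{\tilde{0}}$, and a class $\tilde\alpha\in\HH^n(\tilde{\mathcal{X}},R)$ with $\tilde\alpha|_X=\alpha$. Write $\iota\colon X\hookrightarrow\tilde{\mathcal{X}}$. By the projection formula, $\langle\alpha,z\rangle=\langle\iota^*\tilde\alpha,z\rangle=\langle\tilde\alpha,\iota_*z\rangle$ for every $z\in\HH_n(X,R)$. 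So it is enough to show that the pushforward $\iota_*\colon\HH_n(X,\Z)\to\HH_n(\tilde{\mathcal{X}},\Z)$ is the zero map: since $\HH_*(X,\Z)$ is finitely generated and torsion-free (Lefschetz hyperplane theorem plus Poincar\'e duality), $\iota_*=0$ implies $\iota_*\otimes R=0$ and hence $\langle\alpha,-\rangle=0$ on $\HH_n(X,R)$; as the pairing $\HH^n(X,R)\times\HH_n(X,R)\to R$ is perfect (universal coefficients, $\HH_*(X,\Z)$ free), this forces $\alpha=0$, a contradiction.

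To prove $\iota_*=0$ I would use that, because $n$ is odd and $X$ is a hypersurface, the vanishing cycles $\delta_1,\dots,\delta_N$ of the Lefschetz pencil already generate $\HH_n(X,\Z)$ over $\Z$ (this is classical for hypersurfaces, and reflects that the monodromy group is the full symplectic group). Thus it suffices to show $\iota_*\delta_i=0$ for each $i$. The key point is that each $\delta_i$ is actually visible inside $\tilde{\mathcal{X}}$: choose a distinguished system of vanishing paths whose paths avoid the (finite) branch locus of $\tilde{B}\to B$, which is possible as $B=\PP^1$ stays connected after removing finitely many points; then lift the $i$-th path to a path in $\tilde{B}$ starting at $\tilde{0}$ and ending at some $\tilde{t}_i$ over the $i$-th critical value, staying where $\tilde\pi$ is smooth except at $\tilde{t}_i$. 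Over this path the families $\mathcal{X}$, $\mathcal{X}'$ and $\tilde{\mathcal{X}}$ all coincide, so the transport to $X$ of the local vanishing cycle at $\tilde{t}_i$ is exactly $\delta_i$. Since the class $(\iota_{\tilde{s}})_*$ of a flat family of cycles is locally constant in $\tilde{s}$ over the smooth locus of $\tilde\pi$, we get $\iota_*\delta_i=(\iota_{\tilde{s}})_*\eta$ in $\HH_n(\tilde{\mathcal{X}},\Z)$, where $\tilde{s}$ is near $\tilde{t}_i$, $\eta\in\HH_n(\tilde{\mathcal{X}}_{\tilde{s}},\Z)$ is the local vanishing cycle, and $\iota_{\tilde{s}}\colon\tilde{\mathcal{X}}_{\tilde{s}}\hookrightarrow\tilde{\mathcal{X}}$.

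It remains to show $(\iota_{\tilde{s}})_*\eta=0$, for which it is enough to work in the total space over a small disk $\Delta$ around $\tilde{t}_i$. Near $\tilde{t}_i$ the variety $\mathcal{X}'$ has a local equation $\sum_{k=0}^{n}z_k^2=s^e$, with $e$ the ramification index of $\tilde{B}\to B$; this germ is contractible, so $\eta$ bounds a chain there. Pushing a bounding chain through the resolution $\tilde{\mathcal{X}}\to\mathcal{X}'$, $\eta$ becomes homologous in the total space over $\Delta$ to a cycle supported near the exceptional fibre $E$; and one checks $\HH_n(E,\Z)=0$ precisely because $n$ is odd (for $e=2$, $E$ is a smooth $n$-dimensional quadric, which has no odd cohomology; in general $E$ is assembled from even-dimensional quadrics and projective bundles over them, so again carries no odd cohomology). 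Hence $(\iota_{\tilde{s}})_*\eta=0$, so $\iota_*\delta_i=0$ for all $i$, and the proof concludes.

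I expect the hard part to be the last step: making rigorous the local topology around the resolved degenerate fibre—identifying $E$ for an \emph{arbitrary} resolution and verifying $\HH_n(E,\Z)=0$ when $n$ is odd, and carefully tracking how the bounding chain of $\eta$ meets the exceptional locus. This is exactly where the hypothesis ``$n$ odd'' enters (for $n$ even a vanishing cycle need not die in $\tilde{\mathcal{X}}$, consistent with the statement failing), and it is presumably the technical core of the analysis of vanishing cycles under base change. A secondary point to nail down is the integral statement that the vanishing cycles of a Lefschetz pencil of hypersurfaces generate $\HH_n(X,\Z)$, not merely $\HH_n(X,\Q)$.
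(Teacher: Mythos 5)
Your global strategy is the same as the paper's: reduce to showing that $j_*$ kills every vanishing cycle, use that for odd $n$ the vanishing cycles generate $\HH_n(X,\Z)$ integrally (since $\HH_n(\PP^{n+1},\Z)=0$; this is standard integral Lefschetz theory as in Lamotke, so your ``secondary point'' is not an issue), choose vanishing paths avoiding the branch points of $\tilde{B}\to B$, lift them, and kill the lifted vanishing cycle by a local analysis at the resolved fibre, with $n$ odd entering through the absence of odd homology of quadrics; the pairing and universal-coefficients reduction at the start also matches the paper (and the full-symplectic-monodromy remark is unnecessary). The genuine gap is in your final local step, exactly where you flagged it, and it is twofold. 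First, the claim that for an \emph{arbitrary} resolution of the germ $\sum z_k^2=s^e$ the exceptional fibre $E$ satisfies $\HH_n(E,\Z)=0$ is false: extendability allows any resolution, and one may blow up further along, say, a positive-genus curve $C$ inside the exceptional quadric; the new exceptional component is a $\PP^{n-1}$-bundle over $C$, whose $\HH_n$ contains $\HH_1(C)\otimes\HH_{n-1}(\PP^{n-1})\neq 0$ because $n$ is odd. So ``assembled from even-dimensional quadrics and projective bundles over them'' presupposes a particular resolution; and even granting that $[\eta]$ lies in the image of $\HH_n$ of a neighborhood of $E$, you must identify which class it is rather than quote a vanishing of $\HH_n(E)$. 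Second, ``pushing a bounding chain through the resolution'' goes the wrong way: the Lefschetz thimble lives in the singular space $\mathcal{X}'$ and passes through the singular point, so it does not transport to $\tilde{\mathcal{X}}$; one must either lift it explicitly or excise the singular point and control the resulting boundary cycle on the link. That lifting is precisely the technical core of the paper: Lemma \ref{lem induction} tracks the thimble through each point blow-up by explicit formulas, and Lemma \ref{lem local vc} concludes that, for the resolution obtained by successively blowing up the singular points, the lifted vanishing sphere either bounds a disc outright (odd ramification index) or is homologous to a sphere inside the final smooth quadric $Q$, which dies because $\HH_n(Q,\Z)=0$ for $n$ odd.

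The paper then treats an arbitrary resolution not by analyzing its exceptional fibre but by the dominating-resolution argument in Proposition \ref{prop global vc}: choose a resolution $\tilde{\mathcal{X}}_2$ dominating both the successive-blow-up model and the given one; since all models are isomorphic on a neighborhood of the fibre $X$, the class $j_*\alpha$ transfers between them, and the vanishing established on the blow-up model propagates to the arbitrary one. If you replace your claim ``$\HH_n(E,\Z)=0$ for any resolution'' by this two-step scheme --- explicit thimble lifting on the distinguished resolution, then domination --- your outline becomes the paper's proof; as written, the key step would fail for general resolutions and is not actually carried out for the distinguished one.
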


This non-extendability can be used to obstruct algebraicity as follows. For simplicity, we take $S$ to be an Enriques surface. Then $\HH^3(S,\Z)=\Z/2\Z$ with a generator $u$. 

\begin{cor}[Corollary \ref{cor non-alg}]
Let $X\subset \PP^{n+2}$ be a very general hypersurface of odd dimension $n$. For every element $\alpha\in \HH^n(X,\Z)$ which is not divisible by $2$, the torsion class $\alpha\otimes u\in \HH^{n+3}(X\times S,\Z)$ is not extendable (in a Lefschetz pencil) and hence not algebraic.
\end{cor}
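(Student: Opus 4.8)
The plan is to derive the corollary from Theorem~\ref{thm non-ext} by a K\"unneth argument, after reducing coefficients modulo~$2$. Since $\alpha\otimes u$ is a torsion class it is an integral Hodge class, so by the non-algebraicity criterion it suffices to prove that it is not extendable in a Lefschetz pencil. I would therefore fix a Lefschetz pencil $\pi\colon\mathcal{X}\to B=\PP^1$ of smooth hypersurfaces of dimension $n$ with $X=\pi^{-1}(0)$ a smooth fibre (a very general $X$ is a very general member of such a pencil, as required by the criterion) and consider the flat projective family $\mathcal{X}\times S\to B$, whose fibre over $0$ is $X\times S$. As reduction modulo~$2$ commutes with restriction to a fibre, extendability over $\Z$ implies extendability over $\Z/2$, so it is enough to show that $\bar\alpha\otimes\bar u\in\HH^{n+3}(X\times S,\Z/2)$ is not extendable. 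Here $\HH^3(S,\Z/2)$ is one-dimensional with basis $\bar u$, while $\bar\alpha\neq 0$ because $\HH^n(X,\Z)$ is torsion free (so $\HH^n(X,\Z/2)=\HH^n(X,\Z)/2$) and $\alpha\notin 2\,\HH^n(X,\Z)$.

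Suppose for contradiction that $\bar\alpha\otimes\bar u$ is extendable, witnessed by a generically finite $\tilde B\to B$, a resolution $\tilde{\mathcal{Y}}$ of $\mathcal{X}'\times S$ with $\mathcal{X}'=\mathcal{X}\times_B\tilde B$, a preimage $\tilde 0$ of $0$ with $\tilde{\mathcal{Y}}_{\tilde 0}=X\times S$, and a class $\tilde\gamma\in\HH^{n+3}(\tilde{\mathcal{Y}},\Z/2)$ restricting to $\bar\alpha\otimes\bar u$. The crucial preliminary step is to transport $\tilde\gamma$ onto a \emph{product} resolution. Choosing any resolution $\tilde{\mathcal{X}}\to\mathcal{X}'$, the variety $\tilde{\mathcal{X}}\times S$ is again a resolution of $\mathcal{X}'\times S$; since $\pi$ is smooth near $0$ and $\tilde B$ is smooth, both $\tilde{\mathcal{Y}}$ and $\tilde{\mathcal{X}}\times S$ map isomorphically to $\mathcal{X}'\times S$ over a neighbourhood of the fibre $X\times S$. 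I would then take a smooth common resolution $\mathcal{W}$ with birational morphisms $f\colon\mathcal{W}\to\tilde{\mathcal{Y}}$ and $g\colon\mathcal{W}\to\tilde{\mathcal{X}}\times S$, each an isomorphism over that neighbourhood, and put $\tilde\gamma':=g_*f^*\tilde\gamma\in\HH^{n+3}(\tilde{\mathcal{X}}\times S,\Z/2)$. The Gysin pushforward $g_*$ preserves degree since $g$ is birational between varieties of dimension $n+3$, and compatibility of $f^*$ and $g_*$ with restriction to the fibre over $\tilde 0$ — on which $f$ and $g$ are the identity of $X\times S$ — yields $\tilde\gamma'|_{X\times S}=\bar\alpha\otimes\bar u$.

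With $\tilde\gamma'$ now on a product, the remainder is pure K\"unneth over the field $\Z/2$. In the decomposition $\HH^{n+3}(\tilde{\mathcal{X}}\times S,\Z/2)=\bigoplus_{p+q=n+3}\HH^p(\tilde{\mathcal{X}},\Z/2)\otimes\HH^q(S,\Z/2)$ the restriction to $X\times S$ acts on each summand as the tensor product of the restriction $\HH^p(\tilde{\mathcal{X}},\Z/2)\to\HH^p(X,\Z/2)$ with the identity of $\HH^q(S,\Z/2)$. I would extract the $(n,3)$-component of $\tilde\gamma'$, write it as $\delta\otimes\bar u$ with $\delta\in\HH^n(\tilde{\mathcal{X}},\Z/2)$ (valid because $\HH^3(S,\Z/2)$ has basis $\bar u$), and compare $(n,3)$-components in $\tilde\gamma'|_{X\times S}=\bar\alpha\otimes\bar u$ to get $\delta|_X=\bar\alpha$. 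But $\tilde{\mathcal{X}}$ is a resolution of $\mathcal{X}\times_B\tilde B$ with $X=\tilde{\mathcal{X}}_{\tilde 0}$, so $\delta$ witnesses that $\bar\alpha$ is extendable in the Lefschetz pencil $\mathcal{X}\to B$ — contradicting Theorem~\ref{thm non-ext} applied with the nonzero ring $R=\Z/2$, as $\bar\alpha\neq 0$. Hence $\bar\alpha\otimes\bar u$, and therefore $\alpha\otimes u$, is not extendable, and the non-algebraicity criterion finishes the proof.

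I expect the genuine difficulty to be the second paragraph. The definition of extendability allows an arbitrary resolution of $\mathcal{X}'\times S$, not necessarily one of product form, so one really must compare the given $\tilde{\mathcal{Y}}$ with $\tilde{\mathcal{X}}\times S$ and carry the extending class across a common resolution, verifying carefully that pulling back and pushing forward leave the restriction to the special fibre $X\times S$ intact. By contrast, the reduction modulo~$2$, the torsion-freeness of $\HH^\ast(X,\Z)$ for a smooth hypersurface, and the K\"unneth bookkeeping are all routine, so the substantive content is entirely Theorem~\ref{thm non-ext}.
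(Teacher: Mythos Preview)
Your argument is correct and reaches the same conclusion as the paper, but the mechanism for extracting an extension of $\bar\alpha$ differs. The paper assumes at the outset that the resolution is of the product form $\tilde{\mathcal X}\times S$ and then, rather than projecting onto a K\"unneth summand, treats the reduced class $\bar\tau\in\HH^{n+3}(\tilde{\mathcal X}\times S,\Z/2\Z)$ as a correspondence and applies it to the nonzero element $u'\in\HH^1(S,\Z/2\Z)$ (Poincar\'e dual to $\bar u$), obtaining $\tilde\alpha:=\bar\tau^{*}u'=p_{1*}(\bar\tau\cup p_2^{*}u')\in\HH^{n}(\tilde{\mathcal X},\Z/2\Z)$ with $\tilde\alpha|_X=\bar\alpha$ via the duality $\langle\bar u,u'\rangle=1$. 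This slant-product step is exactly your K\"unneth $(n,3)$-projection in disguise, so the two arguments are equivalent in content. Where you genuinely add something is your second paragraph: the paper tacitly takes the extending class to live on a \emph{product} resolution, whereas the definition of extendability only provides some resolution $\tilde{\mathcal Y}$ of $\mathcal X'\times S$; your common-resolution push--pull argument (an instance of the same trick the paper uses at the end of Proposition~\ref{prop global vc}) fills this small gap and makes the reduction to $\tilde{\mathcal X}\times S$ rigorous.
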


\begin{rmk}
The proof of the corollary reduces to the non-extendability of the image $\bar\alpha$ of $\alpha$ in $\HH^n(X,\Z/2\Z)$; see section 3. In \cite{bo}, Benoist and Ottem considered the case where $X=E$ is a very general elliptic curve. Their method involves an element $\alpha\in \HH^1(E,\Z)$. Instead of considering the topological extendability of $\bar\alpha \in \HH^1(E,\Z/2\Z)$, they consider the degeneration of the double cover $E'\longrightarrow E$ associated to $\bar\alpha$. The obstruction in the Benoist--Ottem example was given an interpretation via unramified cohomology by Colliot-Th\'el\`ene \cite{ct}. It is interesting to see if a similiar interpretation exist for our generalisation.

The counter-examples to the integral Hodge conjecture obtained via the above corollary are all around the range of middle degree cohomology. 
\end{rmk}

Our method also works when $X$ is a hyperplane section of a smooth projective variety $Y$. This more general case is treated in Theorem \ref{thm general case}. Our result shows that there exist integral Hodge classes which are not extendable. Given the outstanding Hodge conjecture, it is natural to ask whether every rational Hodge class on a very general fiber is extendable. 

\vspace{3mm}

\noindent\textbf{Acknowledgement.} A large part of the computations in Section 2 were carried out in the summer of 2018 when I was visiting University of Science and Technology of China. I thank Mao Sheng for the invitation. I also thank John Ottem for the interesting discussions related to this paper. This research was partially supported by NWO Innovational Research Incentives Scheme 016.Vidi.189.015.

\section{Vanishing cycles under blow-up}

\subsection{An induction process}

Let $r\geq 2$ be an integer and let $X_r$ be a complex analytic space with a unique singular point $P_r$. Assume that $P_r$ has an open neighborhood $U_r$ such that
\[
 U_r \cong \{(t,\mathbf{z})\in \C\times \C^{n+1}: t^r=z_0^2 + \cdots + z_n^2\}.
\]
Let $D_\epsilon^{n+1}:=\{\mathbf{x}\in\mathds{R}^{n+1}: |\mathbf{x}|\leq \epsilon\}$ be the closed disc. We have continuous maps
\begin{equation}\label{eq varphi_r}
\varphi_{r,a} : D^{n+1}_{\epsilon} \longrightarrow U_r, \qquad \varphi_{r,a}(\mathbf{x}) = (\xi_r^a|\mathbf{x}|^{2/r},\mathbf{x}),
\end{equation}
where $\xi_r=\exp(\frac{2\pi i}{r})$ and $a=0,1,\ldots, r-1$. Let $M'$ be the blow-up of $M=\C\times \C^{n+1}$ at the point $P=(0, \mathbf{0})$. Let $U'_r\subset M'$ be the strict transform of $U_r$ and $\rho: X'_r \rightarrow X_r$ be the resulting blow-up of $X_r$ at the point $P_r$. We write
\[
 M\backslash \{(0,\mathbf{0})\}= V \cup V_0\cup \cdots \cup V_n, \quad V=\{t\neq 0\},\;\; V_i = \{z_i\neq 0 \}.
\]
Then $M'$ admits a corresponding open cover
\[
 M' = V' \cup V'_0 \cup \cdots \cup V'_n.
\]
Here $V' \cong \C\times \C^{n+1}$ and the map $V' \rightarrow V\cup\{(0,\mathbf{0})\}$ is given by
\[
 (t,w_0,\ldots,w_n) \mapsto (t, tw_0,\ldots,tw_n).
\]
Similarly, we have $V'_i\cong \C\times \C^{n+1}$ and the map $V'_i\rightarrow V_i\cup\{(0,\mathbf{0})\}$ is given by
\[
(t,w_0,\ldots, w_n) \mapsto (tw_i, w_0w_i, \ldots, w_{i-1}w_i, w_i, w_{i+1}w_i, \ldots, w_nw_i).
\]
The exceptional divisor $E$ of the blow-up $M'\rightarrow M$ is isomorphic to $\PP^{n+1}$ and the open cover
\[
 E= (E\cap V')\cup\bigcup_{i=0}^n (E\cap V'_i)
\]
is the standard affine cover associated to the homogeneous coordinates $[T:Z_0:\cdots : Z_n]$ of $\PP^{n+1}$.

We have the following commutative diagram
\[
\xymatrix{
 U'_r\ar[r]\ar[d]_{\rho} &M'\ar[d]\\
 U_r\ar[r] &M
}
\]
Furthermore, $U'_r\cap V' \subset V'$ is defined by the equation
\[
 t^{r-2} = w_0^2 + \cdots +w_n^2.
\]
Thus $U'_r\cap V'$ is smooth if $r=2,3$; it is singular at the point $(t,\mathbf{w}) = (0,\mathbf{0})$ if $r\geq 4$. The intersection $U'_r\cap V'_i$ is defined by the equation
\[
t^rw_i^{r-2} = w_0^2+\cdots + w_{i-1}^2 + 1 + w_{i+1}^2 +\cdots + w_n^2
\]
which is always smooth. 

If $r=2$, then the exceptional divisor of $U'_r\rightarrow U_r$ is the smooth quadric
\[
 Q=\{T^2 = Z_0^2+\cdots Z_n^2\} \subset E=\PP^{n+1}.
\]
If $r\geq 3$, then the exceptional divisor of $U'_r\rightarrow U_r$ is the singular quadric
\[
Q'=\{0=Z_0^2 + \cdots + Z_n^2\}\subset E=\PP^{n+1}.
\]
The singular point of $Q'$ is $P'_r:=[1:0:\cdots:0]\in\PP^{n+1}$.

The map $\varphi_{r,a}$ restricted to $D^{n+1}_\epsilon\backslash\{\mathbf{0}\}$ lifts to $V'$, which is given by
\[
\mathbf{x}=(x_0,\ldots, x_n) \mapsto (\xi_r^a|\mathbf{x}|^{2/r}, \xi_r^{-a}|\mathbf{x}|^{1-2/r}\theta(\mathbf{x})), \qquad\mathbf{x}\in D_{\epsilon}^{n+1}\backslash \{\mathbf{0}\},
\]
where $\theta(\mathbf{x})= \frac{\mathbf{x}}{|\mathbf{x}|}\in S^{n}$. If $r\geq 3$, then the above map extends to 
\[
 \varphi'_{r,a}: D^{n+1}_{\epsilon} \longrightarrow V'
\]
by the same formula and $\mathbf{0} \mapsto (0,\mathbf{0})\in V'$, which is the singular point of $Q'$. In this case, $X'_r$ is locally defined by the equation
\[
 t^{r-2} = w_0^2 + \cdots + w_n^2.
\]
The following lemma implies that the same argument can be repeated on $(X_{r-2}, P_{r-2}) =(X'_r,P'_r)$.
\begin{lem}\label{lem induction}
(1) If $r=2$, then the lifting of $\varphi_{2,a}|_{D^{n+1}_\epsilon\backslash\{\mathbf{0}\}}$ to $X'_r$ can be extended to a continuous map
\[
\varphi'_{2,a} : [0,\epsilon]\times S^n \longrightarrow X'_r
\]
such that $\varphi'_{2,a}(\rho,\mathbf{x}) = \varphi_{2,a}(\rho\mathbf{x})$ for all $(\rho,\mathbf{x})\in (0,\epsilon]\times S^n$. Furthermore, 
\[
\varphi'_{2,a}(0,\mathbf{x}) = [1:(-1)^ax_0:\cdots:(-1)^ax_n]\in Q,
\]
which is an $n$-sphere in $Q\subset E\cong\PP^{n+1}$ that vanishes in the homology of $\PP^{n+1}$. In this case, $X'_r$ is smooth. 

(2) If $r = 3$, then the lifting of $\varphi_{r,a}|_{D^{n+1}_\epsilon\backslash\{\mathbf{0}\}}$ to $X'_r$ can be extended to a continuous map
\[
\varphi'_{r,a} : D^{n+1}_\epsilon \longrightarrow X'_r
\]
such that $\varphi'_{r,a}(\mathbf{0}) = P'_r$ is the singular point of $Q'$. In this case, $X'_r$ is smooth.

(3) If $r\geq 4$, then $X'_r$ is singular at the point $P'_r$ where $X'_r$ is locally defined by an equation
\[
{t'}^{r-2} = {z'}_0^2 + {z'}_1^2+\cdots + {z'}_n^2.
\]
The lifting of $\varphi_{r,a}|_{D^{n+1}_\epsilon\backslash\{\mathbf{0}\}}$ to $X'_r$ can be extended to a continuous map
\[
\varphi_{r-2,0}: D^{n+1}_{\epsilon'} \longrightarrow X'_r, 
\]
with $\varphi_{r-2,0}(\mathbf{0}) = P'_r$ being the singular point of $X'_r$ and $\varphi_{r-2,0}(\mathbf{x}) = (|\mathbf{x}|^{\frac{2}{r-2}}, \mathbf{x})$ for general $\mathbf{x}$. 
\end{lem}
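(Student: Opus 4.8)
The plan is to analyze each case by lifting the map $\varphi_{r,a}$ to the chart $V' \cong \C \times \C^{n+1}$ via the explicit formula already derived above the lemma, namely
\[
\mathbf{x} \mapsto \bigl(\xi_r^a|\mathbf{x}|^{2/r},\ \xi_r^{-a}|\mathbf{x}|^{1-2/r}\theta(\mathbf{x})\bigr), \qquad \theta(\mathbf{x}) = \mathbf{x}/|\mathbf{x}|,
\]
and then check extendability of this formula to $\mathbf{x}=\mathbf{0}$ (or to the boundary circle $|\mathbf{x}|=0$ after reparametrizing in polar coordinates). The three cases are distinguished purely by the exponent $1 - 2/r$: it is negative for $r=2$, zero for $r=3$, and positive for $r \geq 4$. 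This sign dictates the shape of the limit, and the bulk of the work is bookkeeping with these formulas plus identifying the image of the limit inside the exceptional $\PP^{n+1}$ with homogeneous coordinates $[T:Z_0:\cdots:Z_n]$.

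For \textbf{(1)}, when $r=2$ the second coordinate has $|\mathbf{x}|^{1-2/r} = |\mathbf{x}|^0$, but the first coordinate $\xi_2^a|\mathbf{x}|$ tends to $0$, so the image point approaches the exceptional divisor. I would switch to polar coordinates $\mathbf{x} = \rho\mathbf{x}$ with $\rho = |\mathbf{x}| \in (0,\epsilon]$ and $\mathbf{x} \in S^n$; then the lifted map reads $(\rho, \mathbf{x}) \mapsto (\xi_2^a\rho,\ \xi_2^{-a}\mathbf{x})$ in the $V'$ chart, which extends continuously to $\rho = 0$ with value $(0, \xi_2^{-a}\mathbf{x}) = (0, (-1)^a\mathbf{x})$. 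Reading this as a point of $E = \PP^{n+1}$ via the chart's homogeneous coordinates, the $T$-coordinate survives and we get $[1:(-1)^ax_0:\cdots:(-1)^ax_n]$, which lies on $Q = \{T^2 = \sum Z_i^2\}$ precisely because $\sum x_i^2 = 1$ on $S^n$. That this sphere is null-homologous in $\PP^{n+1}$ follows since $\HH_n(\PP^{n+1},\Z) = 0$ for $n \geq 1$. Smoothness of $X'_r$ in this case was already established above the lemma ($t^{r-2} = \sum w_i^2$ with $r=2$ is $\sum w_i^2 = 1$, which is smooth; the other charts were checked too).

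For \textbf{(2)}, $r=3$ makes $|\mathbf{x}|^{1-2/r} = |\mathbf{x}|^{1/3} \to 0$, so \emph{both} coordinates of the $V'$-lift vanish as $\mathbf{x} \to \mathbf{0}$, and the formula $\varphi'_{r,a}(\mathbf{x}) = (\xi_3^a|\mathbf{x}|^{2/3}, \xi_3^{-a}|\mathbf{x}|^{1/3}\theta(\mathbf{x}))$ extends continuously to $\mathbf{0} \mapsto (0,\mathbf{0})$, which is the point $P'_r = [1:0:\cdots:0]$ identified earlier as the singular point of $Q'$. One should double-check continuity at $\mathbf{0}$: both components are bounded by a constant times $|\mathbf{x}|^{1/3}$, so this is immediate. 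For \textbf{(3)}, $r \geq 4$ gives exponent $1 - 2/r > 0$, hence $|\mathbf{x}|^{1-2/r}\theta(\mathbf{x}) \to 0$ as well, and again the lift extends to send $\mathbf{0}$ to $P'_r$; but now, as recorded just before the lemma, $X'_r$ is locally $t^{r-2} = \sum w_i^2$, which \emph{is} singular at the origin with the same type of equation as $X_r$ but with $r$ replaced by $r-2$. The remaining claim is that this extended lift, reparametrized, agrees with $\varphi_{r-2,0}$ up to the scaling implicit in the new local coordinates; this is a matter of comparing $(\xi_r^a|\mathbf{x}|^{2/r}, \ldots)$ with $(|\mathbf{y}|^{2/(r-2)}, \mathbf{y})$ under the coordinate change on $V'$, absorbing the root-of-unity factor and the radial rescaling into $\mathbf{x} \mapsto \mathbf{y}$ on a possibly smaller disc $D^{n+1}_{\epsilon'}$.

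The main obstacle I anticipate is \textbf{not} any single case but getting the identifications on the exceptional divisor exactly right: one must be careful that the chart $V' \to V \cup \{(0,\mathbf{0})\}$, $(t,w_0,\ldots,w_n) \mapsto (t, tw_0,\ldots,tw_n)$, sends $\{t=0\}$ onto $E$ with $[T:Z_0:\cdots:Z_n] = [1:w_0:\cdots:w_n]$, and then track how the $\xi_r^{\pm a}$ factors and the $|\mathbf{x}|$-powers interact when passing to the limit — in particular verifying that in case (1) the $(-1)^a$ really lands on the $Z_i$ and not the $T$, and in cases (2)–(3) that no indeterminate $0/0$ arises. A secondary point requiring care is confirming that the lifted maps genuinely land in $U'_r$ (the strict transform), i.e. that the defining equation $t^{r-2} = \sum w_i^2$ is satisfied identically along the lift — this follows by substituting the formula and using $\sum \theta_i(\mathbf{x})^2 = 1$, but it should be stated. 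Everything else is routine continuity estimates of the form $|f(\mathbf{x})| \le C|\mathbf{x}|^\delta$ with $\delta > 0$.
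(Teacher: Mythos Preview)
Your plan is correct and matches the paper's proof essentially step for step: lift to the $V'$ chart, track the exponent $1-2/r$ (note: it equals $0$ for $r=2$ and $1/3$ for $r=3$, not ``negative'' and ``zero'' as you wrote in the overview, though you use the correct values in the actual casework), and for (3) absorb the $\xi_r^{\pm a}$ into new local coordinates and rescale radially. The paper makes that last step explicit via $t'=\xi_r^{-a}t$, $z'_i=\xi_r^{a}w_i$ and the radial homeomorphism $\mathbf{x}'=|\mathbf{x}|^{-2/r}\mathbf{x}$ with $\epsilon'=\epsilon^{1-2/r}$, which is exactly the ``absorbing the root-of-unity factor and the radial rescaling'' you describe.
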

\begin{proof}
For (1), we note that, in this case, the lifting of $\phi_{2,a}$ restricted to $D^{n+1}_{\epsilon}\backslash\{\mathbf{0}\}$ is given by
\[
 \mathbf{x}=(x_0,\ldots,x_n) \mapsto ((-1)^a|\mathbf{x}|, (-1)^a\theta(\mathbf{x})).
\] 
Note that $(0,\epsilon]\times S^n\cong D^{n+1}_{\epsilon}\backslash\{\mathbf{0}\}$. It is clear that the above map extends to a continuous map $\varphi'_{2,a}: [0,\epsilon]\times S^n$ as stated.

Statement (2) can be shown similarly.

We show the last statement and assume that $r\geq 4$. We have already seen that $X'_r$ has a unique singular point $P'_r$ such that $X'_r$ is locally defined by
\[
t^{r-2} = w_0^2 + w_1^2 + \cdots + w_n^2
\]
and that there is a lifting $\varphi'_{r,a}$ of $\varphi_{r,a}$ given by
\[
 \varphi'_{r,a}(\mathbf{x}) =   (\xi_r^a|\mathbf{x}|^{2/r}, \xi_r^{-a}|\mathbf{x}|^{1-2/r}\theta(\mathbf{x})), \quad \mathbf{x}\in D^{n+1}_\epsilon\backslash \{\mathbf{0}\}
\]
and $\varphi'_{r,a}(\mathbf{0}) = (0,\mathbf{0})=P'_r$. We introduce a new set of coordinates
\begin{align*}
 t' &= \xi_r^{-a}t,\\
 z'_i & = \xi_r^a w_i
\end{align*}
and we see that the local defining equation of $X'_r$ around $P'_r$ becomes
\[
{t'}^{r-2} = {z'}_0^2 + {z'}_1^2+\cdots + {z'}_n^2.
\]
Furthermore, in terms of $(t',\mathbf{z}')$, the map $\varphi'_{r,a}$ becomes
\[
\mathbf{x}\mapsto (|\mathbf{x}|^{2/r},|\mathbf{x}|^{1-2/r}\theta(\mathbf{x}))
\]
for $\mathbf{x}\neq\mathbf{0}$ and $\mathbf{0}\mapsto (0,\mathbf{0})$. Let $\epsilon' = \epsilon^{1-2/r}$ and define a homeomorphism $D^{n+1}_\epsilon \rightarrow D^{n+1}_{\epsilon'}$ by $\mathbf{x}\mapsto \mathbf{x}' = |\mathbf{x}|^{-2/r}\mathbf{x}$ for $\mathbf{x}\neq \mathbf{0}$ and $\mathbf{0}\mapsto \mathbf{x}'=\mathbf{0}$. It follows that the composition $D^{n+1}_{\epsilon'}\rightarrow D^{n+1}_{\epsilon}\rightarrow X'_r$ becomes
\[
\mathbf{x}' \mapsto (|\mathbf{x}'|^{\frac{2}{r-2}}, \mathbf{x}').
\]
This concludes the proof.
\end{proof}

\subsection{Application to vanishing cycles}

\subsection{Local situation}
Let $\Delta\subset\C$ be the unit open disc in the complex plane and $\Delta^*=\Delta\backslash \{0\}$. Let $\pi: X\rightarrow \Delta$ be a proper map of complex manifolds such that $X^*\rightarrow \Delta^*$ is smooth, where $X^*=\pi^{-1}\Delta^*$. We write $X_t:=\pi^{-1}t$, $t\in \Delta$. Assume that $X_0=\pi^{-1}(0)$ has one ordinary double point $P$ such that we have local coordinates $(z_0,\ldots, z_n)$ on an open neighborhood $U$ of $P$ and 
\[
\pi(z_0,z_1,\ldots, z_n) = z_0^2 + z_1^2 + \cdots + z_n^2.
\]
Let $\psi_r:\Delta \rightarrow \Delta$ be the map $\psi_r(t) = t^r$ and $X_r:= \psi_r^* X$ be the base change of $X$, where $r\geq 2$. Namely, we have the following fiber product quare
\[
\xymatrix{
 X_r\ar[r]^{\psi'_r}\ar[d]_{\pi_r} &X\ar[d]^{\pi}\\
 \Delta\ar[r]^{\psi_r} &\Delta
}
\]
Let $U_r=\psi_r^* U$ be the corresponding base change of $U$. Thus $U_r$ is an open neighborhood of the point $P_r={\psi'_r}^{-1}(P)$. Hence we have $U_r\subset \C\times \C^{n+1}$ defined be
\[
U_r= \Set{(t,z_0,\ldots,z_n)\in \C\times \C^{n+1}}{t^r = z_0^2 + z_1^2 + \cdots + z_n^2}.
\]
Then $P_r$ is the unique singular point of $U_r$ and it has coordinates $(0,0,\ldots,0)$. 

For any positive real number $\epsilon \in (0,1)$, let 
\[
S_{\epsilon}^{n}=\{(\epsilon^2, x_0,\ldots, x_n)\in U_r: x_0^2+ \cdots + x_n^2=\epsilon^2, x_i\in \mathds{R}  \}\subset X_{\epsilon^2}
\]
be a vanishing sphere. Let
\[
D_{\epsilon}^{n+1} \rightarrow {X}, \quad (x_0,\ldots,x_n)\mapsto (\sum x_i^2, x_0, \ldots, x_n)
\]
where $D_{\epsilon}^{n+1}=\{(x_0,\ldots,x_n)\in \R^{n+1}: x_0^2+\cdots + x_n^2\leq \epsilon^2\}$ is a small disc whose boundary gives the vanishing shpere $S_{\epsilon}^n$.

Let $\rho: X'_{r}\rightarrow X_r$ be the blow-up of $X_r$ at the point $P_r$ and let $U'_r := \rho^{-1} U_r$. There are $r$ different ways to lift the map $D^{n+1}_{\epsilon}\backslash \{0\}\rightarrow {X}^*$ to $(X'_r)^*$ given by
\begin{equation}
 \mathbf{x}=(x_0,\ldots,x_n) \mapsto (\xi_r^a|\mathbf{x}|^{2/r}, x_0,\ldots, x_n),\quad a=0,1,\ldots,r-1.
\end{equation}
We have seen in Lemma \ref{lem induction} that $X'_r$ is again singular if $r\geq 4$ with a single singular point $P'_r$ and the blow-up proess can be repeated. 

\begin{lem}\label{lem local vc}
The following statements are true.

(1) The singularity of $X_r$ can be resolved by successively blowing up the singular points
\[
\tilde{X} = X_r^{(b)}\longrightarrow \cdots \longrightarrow X_r^{(2)}\longrightarrow X_r^{(1)}=X'_r\longrightarrow X_r
\]
where $b=[\frac{r}{2}]$ and $X^{(b)}_r$ is smooth.

(2) Let $Q\subseteq \tilde{X}$ be the exceptional divisor of the last blow-up $X^{(b)}_r\longrightarrow X_r^{(b-1)}$. Then $Q$ is a component of $\tilde{X}_0 = \tilde{\pi}^{-1}(0)$, where $\tilde{\pi}:\tilde{X}\longrightarrow \Delta$ is the composition of all the blow-ups together with $\pi_r$. If $r=2b$ is even, then $Q$ is a smooth quardric hypersurface of dimension $n$; if $r=2b+1$ is odd, then $Q$ is a cone over a smooth quadric hypersurface of dimension $n-1$.

(3) If $n$ is odd, then any of the $r$ liftings of the vanishing sphere $S^n_{\epsilon} \subset X_{\epsilon^2}$ to $\tilde{X}$ vanishes in $\HH_n(\tilde{X},\Z)$.

(4) If $n$ is even and $r$ is odd, then any of the $r$ liftings of the vanishing sphere $S^n_{\epsilon} \subset X_{\epsilon^2}$ to $\tilde{X}$ vanishes in $\HH_n(\tilde{X},\Z)$.

(5) If $n$ is even and $r$ is also even, then any of the $r$ the liftings of the vanishing sphere $S^n_{\epsilon} \subset X_{\epsilon^2}$ to $\tilde{X}$ is homologous to some sphere $S^n\subset Q$. Furthermore, the sphere $S^n$ vanishes in $\HH_n(\PP^{n+1},\Z)$ under the embedding $Q\hookrightarrow \PP^{n+1}$ of $Q$ as a quadric hypersurface.
\end{lem}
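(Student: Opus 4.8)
The plan is to run an induction on $r$ resting entirely on Lemma \ref{lem induction}. For Part (1) I would first check that $X_r$ has $P_r$ as its only singular point: over $\Delta^*$ the map $X_r^*\to X^*$ is \'etale (it is the base change of the degree-$r$ cover $\psi_r$), so $X_r^*$ is smooth; near $X_{r,0}\setminus\{P_r\}$ the morphism $\pi_r$ is the base change of $\pi$, which is smooth there since $X_0$ has a single ordinary double point; and the local model $\{t^r=z_0^2+\cdots+z_n^2\}$ is singular only at the origin. Then I induct: Lemma \ref{lem induction} says that blowing up the singular point yields a space that is smooth when $r\in\{2,3\}$ and otherwise has a unique singular point with local model $\{t^{r-2}=z_0^2+\cdots+z_n^2\}$, so the same discussion applies to $X^{(1)}_r$ with $r$ replaced by $r-2$. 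Hence after $k$ blow-ups the singularity has ``type'' $r-2k$, the process stops precisely when $r-2k\in\{2,3\}$, i.e.\ after $b=[r/2]$ steps, and $X^{(b)}_r=\tilde X$ is smooth.

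For Part (2), the last blow-up $X^{(b)}_r\to X^{(b-1)}_r$ resolves a singularity of type $2$ (when $r=2b$) or of type $3$ (when $r=2b+1$), whose exceptional divisors were already identified in the discussion preceding Lemma \ref{lem induction}: in the first case it is $Q=\{T^2=Z_0^2+\cdots+Z_n^2\}\subset\PP^{n+1}$, a smooth quadric of dimension $n$; in the second case it is $Q=\{Z_0^2+\cdots+Z_n^2=0\}\subset\PP^{n+1}$, which, being independent of $T$, is the cone with vertex $[1:0:\cdots:0]$ over the smooth quadric $\{Z_0^2+\cdots+Z_n^2=0\}$ of dimension $n-1$. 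Since the center of every blow-up in Part (1) lies over $0\in\Delta$, we have $Q\subseteq\tilde\pi^{-1}(0)=\tilde X_0$; and $Q$, being the exceptional divisor of a blow-up of a point in the $(n+1)$-fold $\tilde X$, is a Cartier divisor, hence of pure dimension $n$, hence (being irreducible) a single component of the $n$-dimensional fiber $\tilde X_0$.

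Parts (3)--(5) share one mechanism. The resolution $\tilde X\to X_r$ is an isomorphism over $\Delta^*$, so $\tilde X^*:=\tilde\pi^{-1}(\Delta^*)\cong X_r^*$; over $D^*:=D^{n+1}_\epsilon\setminus\{\mathbf 0\}$ the disc map $\mathbf x\mapsto(x_0^2+\cdots+x_n^2,\mathbf x)$ into $X$ has exactly $r$ lifts to $\tilde X^*$ (indexed by $a=0,\dots,r-1$), and restricting the $a$-th lift to $\partial D^{n+1}_\epsilon$ gives the $a$-th lift $\tilde S^n_a$ of $S^n_\epsilon$. I would follow each $\tilde S^n_a$ up the tower of Part (1): one blow-up at a singular point of type $\geq 4$ carries the lift of the disc, after the linear change of coordinates and the radial reparametrization from the proof of Lemma \ref{lem induction}, to the standard disc $\varphi_{r-2,0}$ for the type-$(r-2)$ singularity (so the index becomes $0$ after the first blow-up); iterating, the last stage is type $3$ if $r$ is odd and type $2$ if $r$ is even. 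If $r$ is odd, Lemma \ref{lem induction}(2) shows the lift of the disc extends continuously across $\mathbf 0$ to a map $D^{n+1}\to\tilde X$, so $\tilde S^n_a$ bounds a singular $(n+1)$-disc and vanishes in $\HH_n(\tilde X,\Z)$; this proves (4) and the odd-$r$ case of (3) for every parity of $n$. If $r$ is even, Lemma \ref{lem induction}(1) shows the lift of the disc extends to a cylinder $[0,\epsilon']\times S^n\to\tilde X$ sending $\{0\}\times S^n$ onto the sphere $S^n:=\{[1:x_0:\cdots:x_n]:x_0^2+\cdots+x_n^2=1\}\subset Q$, whence $[\tilde S^n_a]=[S^n]$ in $\HH_n(\tilde X,\Z)$. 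For $n$ even this yields (5), since Lemma \ref{lem induction}(1) also records that $S^n$ vanishes in $\HH_n(\PP^{n+1},\Z)$ (it bounds the real disc in the affine chart $\{T\neq0\}$). For $n$ odd (the even-$r$ case of (3)), $Q$ is a smooth quadric of odd dimension $n$, so $\HH_n(Q,\Z)=0$ and thus $[S^n]=0$ already in $\HH_n(Q,\Z)$, a fortiori in $\HH_n(\tilde X,\Z)$.

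\textbf{Main obstacle.} The technical heart is the bookkeeping in Parts (3)--(5): arranging the iterated applications of Lemma \ref{lem induction} so that the successive linear coordinate changes and radial disc reparametrizations compose to an honest homeomorphism from the original disc onto the disc (or cylinder) at the last stage, with the boundary sphere consistently identified with $\tilde S^n_a$ and the index behaving as claimed. The homological inputs --- a sphere bounding a singular disc is null-homologous, the real $n$-sphere bounds an $(n+1)$-disc in an affine chart, and the degree-$n$ homology of a smooth quadric of dimension $n$ (with $n$ odd) vanishes --- are all standard.
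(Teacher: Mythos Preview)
Your proposal is correct and follows essentially the same approach as the paper: both argue by iterated application of Lemma~\ref{lem induction} to handle (1), (2), (4), (5) and the odd-$r$ half of (3), and both single out the even-$r$ case of (3) as the one nontrivial point, settled by observing that the lifted sphere lands in $Q$ and that $\HH_n(Q,\Z)=0$ for a smooth quadric of odd dimension $n$. Your write-up simply spells out in more detail what the paper summarizes as ``direct application.''
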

\begin{proof}
Most of the statements are direct application of Lemma \ref{lem induction}. We only need to prove (3) when $r$ is even. In this case, by (1) of Lemma \ref{lem induction}, we know that the lifting of $S^n_{\epsilon}\subset X_{\epsilon^2}$ to $\tilde{{X}}$ is homologous to an $n$-sphere $S^n\subset Q$. Thus the homology class of the lifting of $S^n_\epsilon$ lands in the image of 
\[
\HH_n(Q,\Z) \longrightarrow \HH_n(\tilde{X},\Z).
\]
When $n$ is odd, we have $\HH_n(Q,\Z) = 0$ since a smooth quadric has trivial homology goup in odd degree. Thus we obtain the vanishing in (3).
\end{proof}

\subsection{Global situation}

Let $\mathcal{X}$ be a smooth algeraic variety of dimension $n+1$ and $B$ a smooth curve. Let $\pi: \mathcal{X} \longrightarrow B$ be a proper morphism such that the following conditions holds.
\begin{itemize}
\item There exists a set $S=\{b_1,b_2, \ldots, b_m\}\subset B$ of finitely many points such that $X_{b_i}=\pi^{-1}b_i$ contains exactly one isolated singular point $P_i$ which is an ordinary double point.
\item The morphism $\pi$ is smooth over $B\backslash S$.
\end{itemize}
Let $0\in B$ be a point not in $S$ and let $X=X_0$. Thus $X$ is a smooth complete variety over $\C$. Let $\Delta_i\subset B$ be a small disc centered at $b_i$. Let $\Delta_i^* := \Delta_i\backslash\{b_i\}$. 
\begin{defn}
A sphere $S_\epsilon^n\subset X_{t_i}$, $t_i\in\Delta_i^*$, is called a \textit{vanishing sphere} associated to $P_i$ if the following conditions hold: (1) there exist local coordinates $\mathbf{z}=(z_0,z_1,\ldots,z_n)$ of $\mathcal{X}$ at $P_i=(0,0,\ldots,0)$;  (2) there is a local coordinate $t$ on $\Delta_i$ such that $\pi$ is locally given by 
$$
t=\pi (\mathbf{z}) = z_0^2+ z_1^2 + \cdots + z_n^2;
$$
(3) with the above coordinates, we have $t_i=\epsilon^2$ and $S^n_{\epsilon}$ is given by all points $\mathbf{z} = (x_0, x_1,\ldots,x_n)$ with $x_i\in\R$ and $\sum x_i^2=\epsilon^2$.
\end{defn}

Let $\gamma: [0,1] \longrightarrow B\backslash S$ be a continous path such that $\gamma(0)=0$ and $\gamma(1) = t_i\in \Delta_i^*$ for some $i$. Then
\[
\gamma_* : \HH_n(X,\Z) \longrightarrow \HH_n(X_{t_i},\Z)
\]
is an isomorphism. 
\begin{defn}
We say that a class $\alpha\in \HH_n(X,\Z)$ is a \textit{primitive vanishing class} if there exists a path $\gamma:[0,1]\longrightarrow B\backslash S$ as above such that $\gamma_*\alpha\in \HH_n(X_{t_i},\Z)$ is the class of a vanishing sphere associated to $P_i$. A class $\alpha'\in \HH_n(X,\Z)$ is a \textit{vanishing class} (associated to $\mathcal{X}/B$) if it is an integral linear combination of primitive vanishing classes.
\end{defn}

\begin{prop}\label{prop global vc}
Let $\mathcal{X}$ be a smooth algeraic variety and $B$ a smooth curve. Let $\pi: \mathcal{X} \longrightarrow B$ be a proper morphism as above. Let $0\in B\backslash S$ and $X= X_0$. Let $\tilde{B}$ be another smooth curve and let $f: \tilde{B}\rightarrow B$ be a finite morphism. Let $\mathcal{X}':=\mathcal{X}\times_B \tilde{B}$ be the base change of $\mathcal{X}$ and let $\tilde{\mathcal{X}}$ be a resolution of $\mathcal{X}'$. Let $\tilde{\pi}: \tilde{\mathcal{X}}\longrightarrow \tilde{B}$ be the resulting morphism induced from the morphism $\pi$. Let $\tilde{0}\in \tilde{B}$, such that $f(\tilde{0})=0$ and hence $X\cong \tilde{\pi}^{-1}\tilde{0}$. Let $j: X\hookrightarrow \tilde{\mathcal{X}}$ be the embedding. Let $\alpha\in \HH_n(X,\Z)$ be a vanishing class associated to $\mathcal{X}/B$.

(1) If $n$ is odd, then $j_*\alpha = 0$ in $\HH_n(\tilde{\mathcal{X}},\Z)$.

(2) If $n$ is even and $\tilde{\mathcal{X}}$ is obtained by successively blowing up the singular points, then $j_*\alpha$ is in the image of 
\[
 \bigoplus_{l=1}^N \HH_n(Q_{l},\Z)_{\mathrm{van}}\longrightarrow \HH_n(\tilde{\mathcal{X}},\Z),
 \] 
 where $Q_l$ runs through all smooth qudric hypersufaces appearing as components of the exceptional set of the morphism $\tilde{\mathcal{X}}\longrightarrow \mathcal{X}'$ and $\HH_n(Q_l,\Z)_{\mathrm{van}}$ consists of classes $\beta\in \HH_n(Q_l,\Z)$ that vanish in $\HH_n(\PP^{n+1},\Z)$ under the natural embedding $Q_l\subset \PP^{n+1}$.
\end{prop}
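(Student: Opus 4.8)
The plan is to reduce Proposition~\ref{prop global vc} to the purely local statements of Lemma~\ref{lem local vc}, moving the class $\alpha$ from the fibre $X$ to a degenerating vanishing sphere near one of the points $P_i$ by parallel transport along a path.

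\textbf{Reduction and transport.} Since $j_*$ is a group homomorphism and both targets in (1) and (2) are subgroups of $\HH_n(\tilde{\mathcal{X}},\Z)$, I would first reduce to the case where $\alpha$ is a single primitive vanishing class, fixing a path $\gamma\colon[0,1]\to B\setminus S$ with $\gamma(0)=0$, $\gamma(1)=t_i\in\Delta_i^*$ and $\gamma_*\alpha=[S^n_\epsilon]$. A finite surjective morphism of smooth curves is a branched covering, so $\gamma$ lifts uniquely to $\tilde\gamma\colon[0,1]\to\tilde B$ with $\tilde\gamma(0)=\tilde 0$; set $\tilde t_i=\tilde\gamma(1)\in f^{-1}(t_i)$. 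As $\gamma$ misses $S$, the path $\tilde\gamma$ misses $f^{-1}(S)$, and over $\tilde B\setminus f^{-1}(S)$ the morphism $\tilde\pi$ coincides with $\mathcal{X}\times_B\tilde B\to\tilde B$, which is smooth and proper; Ehresmann's theorem then makes $\tilde\pi$ a locally trivial fibre bundle over a neighbourhood of $\tilde\gamma([0,1])$. Parallel transport along $\tilde\gamma$ identifies $X=\tilde\pi^{-1}(\tilde 0)$ with $X_{t_i}=\tilde\pi^{-1}(\tilde t_i)$, and (transport in a pullback family being the pullback of the transport) this identification is precisely the monodromy isomorphism $\gamma_*$. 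Sweeping a cycle representing $\alpha$ through this transport gives an $(n+1)$-chain in $\tilde{\mathcal{X}}$ whose boundary is the difference of the two cycles, whence $j_*\alpha=(\iota_{X_{t_i}})_*[S^n_\epsilon]$ in $\HH_n(\tilde{\mathcal{X}},\Z)$, where $\iota_{X_{t_i}}\colon X_{t_i}\hookrightarrow\tilde{\mathcal{X}}$ is the inclusion. Note this holds for \emph{any} resolution $\tilde{\mathcal{X}}$.

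\textbf{The local input.} Let $\tilde b_i\in f^{-1}(b_i)$ be the endpoint of the sheet followed by $\tilde\gamma$ and let $r=r_i\ge1$ be its ramification index. Near the point of $\mathcal{X}'$ lying over $(P_i,\tilde b_i)$, the space $\mathcal{X}'$ is analytically the model $U_r$ studied via Lemma~\ref{lem induction}, and $S^n_\epsilon\subset X_{t_i}$ is the boundary of one of the $r$ lifts of the Lefschetz disc. If $r=1$ this model is already smooth, the resolution is an isomorphism there, and $S^n_\epsilon$ bounds the lifted thimble, so $j_*\alpha=0$. If $r\ge2$ and $\tilde{\mathcal{X}}$ is the iterated blow-up resolution, then a neighbourhood of the relevant exceptional fibre is exactly the local resolution $\tilde X$ of Lemma~\ref{lem local vc}, and that lemma finishes the job applied to $[S^n_\epsilon]$: part (3) gives $j_*\alpha=0$ when $n$ is odd; for $n$ even, part (4) gives $j_*\alpha=0$ when $r$ is odd, while part (5) shows for $r$ even that $j_*\alpha=(\iota_{Q_l})_*\beta$ with $\beta\in\HH_n(Q_l,\Z)$ dying in $\HH_n(\PP^{n+1},\Z)$, i.e.\ $\beta\in\HH_n(Q_l,\Z)_{\mathrm{van}}$, where $Q_l$ is the smooth quadric exceptional divisor of the final blow-up over this point. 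Re-assembling the primitive constituents of a general vanishing class then yields part (2), and part (1) for the iterated blow-up resolution.

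\textbf{Arbitrary resolution, and the main obstacle.} For part (1) with an arbitrary resolution $\tilde{\mathcal{X}}$, I would choose a resolution $\tilde{\mathcal{X}}_2$ over $\tilde B$ dominating both $\tilde{\mathcal{X}}$ and the iterated blow-up $\tilde{\mathcal{X}}_0$, say $p\colon\tilde{\mathcal{X}}_2\to\tilde{\mathcal{X}}$ and $q\colon\tilde{\mathcal{X}}_2\to\tilde{\mathcal{X}}_0$, and write $j_0,j_2$ for the fibre inclusions of $X$ into $\tilde{\mathcal{X}}_0,\tilde{\mathcal{X}}_2$. Both $p$ and $q$ are isomorphisms over $\mathcal{X}'\setminus\Sigma$, with $\Sigma$ the finite singular locus of $\mathcal{X}'$, and this open set contains $X$; hence the squares presenting $X$ as a fibre of each resolution are Cartesian, base change for the refined Gysin pullback gives $j_{2,*}\alpha=q^{!}(j_{0,*}\alpha)=q^{!}(0)=0$ by the previous step, and pushing forward along $p$ gives $j_*\alpha=p_*(j_{2,*}\alpha)=0$. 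I expect the points requiring the most care to be the parallel-transport identification of the Reduction step---notably the path lifting through branch points of $f$ and the compatibility of the transport with the monodromy isomorphism $\gamma_*$---together with this last comparison of resolutions; all the genuine geometric content, however, is carried by Lemma~\ref{lem local vc}, so the Proposition is in essence its globalization.
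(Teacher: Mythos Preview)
Your proposal is correct and follows essentially the same route as the paper: reduce to a primitive vanishing class, lift the path $\gamma$ to $\tilde B$, transport the vanishing sphere into a local model near a singular point of $\mathcal{X}'$, invoke Lemma~\ref{lem local vc}, and for part~(1) compare an arbitrary resolution with the iterated blow-up via a common roof. The only cosmetic differences are that the paper makes explicit the perturbation of $\gamma$ away from the branch values of $f$ before lifting (which you correctly flag as a care point), and that for the roof argument the paper writes the transfer as $\tau^*$ on homology rather than your refined Gysin $q^!$; these are the same map, justified by the fact that the dominating morphism is an isomorphism near $X$. Your explicit treatment of the unramified case $r=1$ is a small addition the paper leaves implicit.
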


\begin{proof}
We first look at the local behaviour of the morphism $f: \tilde{B}\longrightarrow B$ around a point $b_i\in S$. Assume that
\[
 f^{-1}b_i = \{b'_{i,1}, b'_{i,2}, \ldots, b'_{i,m_i}\}\subset \tilde{B}.
 \] 
 For each point $b'_{i,l}$, we can find a small disc $\Delta_{i,l}\subset \tilde{B}$ centered at $b'_{i,l}$ such that the morphism $f:\tilde{B}\longrightarrow B$ restricts to the analytic map
 \[
 f_{i,l}: \Delta_{i,l}\longrightarrow \Delta_i,\qquad z\mapsto z^{r_{i,l}}.
 \]
To prove the proposition, we first assume that the resolution $\tilde{\mathcal{X}}\longrightarrow \mathcal{X}'$ is the one obtained by successively blowing up the singular points. Without loss of generality, we may assume that $\alpha$ is a primitive vanishing class. Thus there is a path $\gamma: [0,1]\longrightarrow B\backslash S$ with $\gamma(0) = 0\in B$ and $\gamma(1)=t_i \in \Delta_i^*$ such that $\gamma_* \alpha$ is the class of a vanishing sphere in $X_{t_i}$. We may choose $\gamma$ in such a way that it avoids all the branching points of the morphism $f$. Thus there exists a unique lifting $\tilde{\gamma}: [0,1]\longrightarrow \tilde{B}$ such that $\tilde{\gamma}(0) = \tilde{0}$. Then $\tilde\gamma(1)=\tilde{t}_i\in \Delta_{i,l}^*$ for some $l\in \{1,2,\ldots,m_i\}$. Furthermore, $\tilde\gamma_*\alpha$ is the class of a lifting $\tilde{S}^n_\epsilon$ of the vanishing sphere $S^n_{\epsilon}$ in $X_{t_i}$. If $n$ is odd, then by Lemma \ref{lem local vc} (3) we know that the homology class of $\tilde{S}^n_\epsilon$ vanishes in $\HH_n (\tilde\pi^{-1}\Delta_{i,l}, \Z)$ and hence also in $\HH_n(\tilde{\mathcal{X}},\Z)$. Similarly, if $n$ is even, we conclude from Lemma \ref{lem local vc} (4) and (5).

Now assume $n$ is odd. We still need to establish the vanishing on an arbitrary resolotion $\tilde{\mathcal{X}}_1$ of $\mathcal{X}'$. Let $\tilde{\mathcal{X}}$ be the resolution of $\mathcal{X}'$ obtained by successively blowing up the singular points. Then we can find another resolution $\tilde{\mathcal{X}}_2$ which dominates both $\tilde{\mathcal{X}}$ and $\tilde{\mathcal{X}}_1$, namely we have a diagram
\[
\xymatrix{
 &\tilde{\mathcal{X}}_2\ar[ld]_{\tau}\ar[rd]^{\tau'} &\\
 \tilde{\mathcal{X}} &&\tilde{\mathcal{X}}_1
}
\]
Let $j_1: X\hookrightarrow \tilde{\mathcal{X}}_1$, $j_2: X\hookrightarrow \tilde{\mathcal{X}}_2$ and $j: X\hookrightarrow \tilde{\mathcal{X}}$ be the inclusion of the fiber over $\tilde{0}\in \tilde{B}$ in the corresponding models. Set $\alpha_1=j_{1,*}\alpha$, $\alpha_2=j_{2,*}\alpha$ and $\tilde{\alpha}=j_*\alpha$ to be the corresponding homology classes. We have already see that $\tilde\alpha=0$. Since these models are isomorhic on an open neighborhood of the fiber $X$. We have
$\alpha_2 = \tau^*\tilde{\alpha}=0$ and $\alpha_1 = \tau'_*\alpha_2=0$. 
\end{proof}

\section{Applications to the integral Hodge conjecture}

In this section, we construct a class of new examples of the failure of the integral Hodge conjecture. These generalises the examples of Benoist--Ottem \cite{bo}. 

Let $S$ be an Enriques surface. The cohomology groups of $S$ are described as follows.
\begin{align*}
\HH^0(S,\Z) = \Z,\qquad &\HH^0(S,\Z/2\Z) = \Z/2\Z,\\
\HH^1(S,\Z) = 0,\qquad &\HH^1(S,\Z/2\Z) =\Z/2\Z, \\
\HH^2(S,\Z) = \Z^{\oplus 10}\oplus \Z/2\Z,\qquad &\HH^2(S,\Z/2\Z) = (\Z/2\Z)^{\oplus 12} ,\\
\HH^3(S,\Z) = \Z/2\Z,\qquad &\HH^3(S,\Z/2\Z) = \Z/2\Z,\\
\HH^4(S,\Z) =\Z,\qquad &\HH^4(S,\Z/2\Z) = \Z/2\Z.
\end{align*}

\subsection{Special case: hypersurfaces}

Let $X\subset \PP^{n+1}$ be a smooth hypersurface. Assume that the dimension $n$ of $X$ is odd. By Lefschetz Hyperplane Theorem, we know that 
\[
\HH^p(\PP^{n+1},\Z) \longrightarrow \HH^p(X,\Z)
\]
is an isomorphism for $p<n$. Thus $\HH^{<n}(X,\Z)$ is torsion free and algebraic. Similarly, $\HH_{<n}(X,\Z)$ is torsion free. By Serre duality, we conclude that $\HH^{>n}(X,\Z)$ is also torsion free and so is $\HH_{>n}(X,\Z)$. Then by the universal coefficient thoerem for cohomology, we see that
\[
\HH^n(X,\Z) \cong \Hom_{\Z}(\HH_n(X,\Z), \Z)\; \oplus \;\HH_{n-1}(X,\Z)_{\mathrm{tor}}
\]
is also torsion free. Hence we conclude that both $\HH^*(X,\Z)$ and $\HH_*(X,\Z)$ are torsion free.

\begin{lem}
The following equality holds
\[
\HH^{n+3}(X\times S,\Z) = \bigoplus_{i=0}^4 \HH^{n+3-i}(X,\Z)\otimes\HH^i (S,\Z).
\]
\end{lem}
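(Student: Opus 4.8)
\emph{Proof sketch.} The plan is to apply the K\"unneth theorem over $\Z$ and then exploit the torsion-freeness of $\HH^*(X,\Z)$ established in the preceding paragraph. Since $X$ is a smooth projective variety it has the homotopy type of a finite CW complex, so $\HH^k(X,\Z)$ is finitely generated for every $k$, and the K\"unneth short exact sequence
\[
0\to\bigoplus_{i+j=k}\HH^i(X,\Z)\otimes_{\Z}\HH^j(S,\Z)\to\HH^k(X\times S,\Z)\to\bigoplus_{i+j=k+1}\mathrm{Tor}^{\Z}_1\big(\HH^i(X,\Z),\HH^j(S,\Z)\big)\to 0
\]
is available, the left-hand map being the cross product.

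Next I would observe that, by the previous paragraph, each $\HH^i(X,\Z)$ is a free $\Z$-module, hence $\mathrm{Tor}^{\Z}_1\big(\HH^i(X,\Z),-\big)=0$ for every $i$; in particular the right-hand term of the sequence vanishes for all $k$. (It is irrelevant here that $\HH^2(S,\Z)$ and $\HH^3(S,\Z)$ carry $2$-torsion, since it is the cohomology of $X$, not of $S$, that must be torsion-free.) Therefore the cross product gives an isomorphism $\HH^k(X\times S,\Z)\cong\bigoplus_{i+j=k}\HH^i(X,\Z)\otimes_{\Z}\HH^j(S,\Z)$ for all $k$.

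Finally I would specialize to $k=n+3$. Since $S$ is a connected compact complex surface, $\HH^j(S,\Z)=0$ for $j<0$ and for $j>4$, so only the summands with $j\in\{0,1,2,3,4\}$ survive; relabelling the index $j$ as $i$ and writing the complementary factor as $\HH^{n+3-i}(X,\Z)$ yields exactly $\bigoplus_{i=0}^{4}\HH^{n+3-i}(X,\Z)\otimes\HH^i(S,\Z)$. (For $n\ge 1$ the occurring degrees $n+3-i\in\{n-1,n,n+1,n+2,n+3\}$ all lie in $[0,2n+2]$, so these are genuinely the cohomology groups of $X$ and no further collapsing is forced.)

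I do not expect any serious obstacle: the only point requiring a little care is the applicability of the K\"unneth sequence together with its $\mathrm{Tor}$ term, which is guaranteed by the finite generation of $\HH^*(X,\Z)$. If one wishes to avoid invoking the general K\"unneth formula, one can instead argue at the chain level: $\HH_*(X,\Z)$ is free and finitely generated, so $C_*(X,\Z)$ is chain-homotopy equivalent to $\HH_*(X,\Z)$ with zero differential, whence the Eilenberg--Zilber theorem gives $C^*(X\times S,\Z)\simeq\HH^*(X,\Z)\otimes_{\Z}C^*(S,\Z)$, and passing to cohomology (using that the free module $\HH^*(X,\Z)$ is flat) gives the same conclusion.
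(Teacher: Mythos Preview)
Your proposal is correct and follows essentially the same approach as the paper: both apply the K\"unneth formula for $\HH^{n+3}(X\times S,\Z)$ and kill the $\mathrm{Tor}_1$-term using the torsion-freeness of $\HH^*(X,\Z)$ established just before the lemma. Your additional remarks on finite generation and the alternative chain-level argument are not needed for the paper's short proof, but they are harmless elaborations of the same idea.
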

\begin{proof}
The K\"unneth formula applied to this case gives
\[
 \HH^{n+3}(X\times S,\Z)=
\left( \bigoplus_{i=0}^4 \HH^{n+3-i}(X,\Z)\otimes\HH^i (S,\Z)\right) \oplus \left( \bigoplus_{i=0}^4 \mathrm{Tor}_1\Big( \HH^{n+4-i}(X,\Z), \HH^i (S,\Z) \Big)\right)
\]
Since the cohomology of $X$ is torsion free, we see that in the $\mathrm{Tor}_1$-term vanishes.
\end{proof}

\begin{thm}\label{thm non-ext}
Let $\pi: \mathcal{X} \longrightarrow B=\PP^1$ be a Lefschetz pencil of smooth hypersurfaces of odd dimension $n$. Let $X=\pi^{-1}0$ be a smooth fiber. Then any non-zero element $\alpha \in \HH^n(X,R)$ is non-extendable, where $R$ is a non-zero commutative ring.
\end{thm}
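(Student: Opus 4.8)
I would prove the contrapositive: if $\alpha \in \HH^n(X,R)$ is extendable, then $\alpha = 0$. The key mechanism is the compatibility between extendability (a property about classes on resolutions of base changes) and the vanishing results of Proposition \ref{prop global vc}, together with the fact that for a Lefschetz pencil of odd-dimensional hypersurfaces the middle cohomology of a smooth fiber is \emph{entirely} spanned by vanishing cycles. So the proof breaks into three pieces: (i) reduce extendability of $\alpha \in \HH^n(X,R)$ to a statement about a single base change $\tilde{B} \to B$ and a resolution $\tilde{\mathcal{X}}$ of $\mathcal{X}\times_B \tilde{B}$; (ii) dualize to homology and invoke Proposition \ref{prop global vc}(1) to kill the relevant classes; (iii) use the classical Lefschetz theory fact that $\HH^n(X,\Z)$ (and hence $\HH^n(X,R)$ after base change of coefficients, using that $\HH^*(X,\Z)$ is torsion free as established in the lemma preceding the theorem) is generated by vanishing cycles to conclude $\alpha = 0$.

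\textbf{Step (i): set-up.} Suppose $\alpha$ is extendable. Then there is a generically finite $\tilde{B} \to B$, a resolution $\tilde{\mathcal{X}} \to \mathcal{X}' = \mathcal{X}\times_B \tilde{B}$ with a chosen $\tilde{0}$ over $0$ and an identification $j: X \hookrightarrow \tilde{\mathcal{X}}$ of $X$ with $\tilde{\pi}^{-1}(\tilde{0})$, and a class $\tilde\alpha \in \HH^n(\tilde{\mathcal{X}},R)$ with $j^*\tilde\alpha = \alpha$. Since $B = \PP^1$ is a curve, after possibly shrinking we are exactly in the situation of Proposition \ref{prop global vc}: a Lefschetz pencil has each singular fiber with one ordinary double point and is smooth elsewhere. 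I would then pass to homology. Let $\beta \in \HH_n(X,R)$ be an arbitrary class; by the projection/pullback formula, the pairing of $\alpha$ with $\beta$ computes as the pairing of $\tilde\alpha$ with $j_*\beta \in \HH_n(\tilde{\mathcal{X}},R)$.

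\textbf{Step (ii): vanishing.} Now I specialize $\beta$ to be a vanishing class associated to $\mathcal{X}/B$. Since $n$ is odd, Proposition \ref{prop global vc}(1) gives $j_*\beta = 0$ in $\HH_n(\tilde{\mathcal{X}},\Z)$, hence also in $\HH_n(\tilde{\mathcal{X}},R)$ after change of coefficients (using torsion-freeness of $\HH_*(X,\Z)$ and of $\HH_*(\tilde{\mathcal{X}},\Z)$ near the fiber so that the universal coefficient map behaves; or more simply just noting $j_*\beta=0$ integrally forces it in any coefficients). Therefore $\langle \alpha, \beta\rangle = \langle \tilde\alpha, j_*\beta\rangle = 0$ for every vanishing class $\beta$.

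\textbf{Step (iii): conclusion, and the main obstacle.} By the Lefschetz theory of a Lefschetz pencil, the vanishing cycles span $\HH_n(X,\Z)$: the image of $\HH_n(X,\Z) \to \HH_n(\mathcal{X},\Z)$ (or rather the kernel of $\HH_n(X)\to \HH_n(\overline{\mathcal{X}})$, the "vanishing homology") is all of $\HH_n(X,\Z)$ because $\HH_n$ of the total space of the pencil, being fibered with the relevant Lefschetz structure over $\PP^1$, does not receive $\HH_n(X)$ — concretely, for a hypersurface section of $\PP^{n+1}$ the primitive (here: all of) middle homology is generated by vanishing cycles. Since $\HH^*(X,\Z)$ is torsion free, $\HH^n(X,\Z) = \Hom(\HH_n(X,\Z),\Z)$, and $\HH^n(X,R) = \Hom(\HH_n(X,\Z),R)$; a class pairing to zero against a spanning set of $\HH_n(X,\Z)$ (and hence against all of $\HH_n(X,R)$) must be zero. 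This gives $\alpha = 0$, as desired. The step I expect to be the real content to nail down carefully is showing that the vanishing classes span \emph{all} of $\HH_n(X,\Z)$ (not merely the primitive part) — for odd-dimensional hypersurfaces the primitive cohomology \emph{is} the full middle cohomology since $\HH^n(\PP^{n+1}) = 0$, so this reduces to the standard statement that vanishing cycles generate primitive middle homology in a Lefschetz pencil; I would cite Lefschetz/Lamotke or SGA 7.II for this, and just make sure the coefficient change to a general commutative ring $R$ is handled via the torsion-freeness already proved.
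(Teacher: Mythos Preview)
Your proposal is correct and follows essentially the same route as the paper: assume extendability, use Proposition~\ref{prop global vc}(1) to get $j_*\beta=0$ for every vanishing class $\beta\in\HH_n(X,\Z)$, invoke Lefschetz theory (citing Lamotke) together with $\HH_n(\PP^{n+1},\Z)=0$ for odd $n$ to see that \emph{every} $\beta\in\HH_n(X,\Z)$ is a vanishing class, and then conclude via $\HH^n(X,R)=\Hom_\Z(\HH_n(X,\Z),R)$ from torsion-freeness. The only cosmetic difference is that the paper phrases Step~(iii) by starting with an arbitrary $\beta\in\HH_n(X,\Z)$ and observing it is automatically a vanishing class, whereas you first restrict to vanishing $\beta$ and then argue these span; also, your remark about ``possibly shrinking'' is unnecessary since Proposition~\ref{prop global vc} already applies globally to the Lefschetz pencil over $B=\PP^1$.
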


\begin{proof}
Assume that $\alpha$ is extendable. Then there exists a smooth projective curve $\tilde{B}$ and a finite morphism $\tilde{B}\longrightarrow B$ such that a resolution $\tilde{\mathcal{X}}$ of the base change $\mathcal{X}'=\mathcal{X}\times_{B}\tilde{B}$ is obtained by successively blowing up the singular points. Let $\tilde{\pi}:\tilde{\mathcal{X}}\longrightarrow \tilde{B}$ be the induced morphism. Furthermore, we have a cohomology class $\tilde{\alpha} \in \HH^n(\tilde{\mathcal{X}}, R)$ such that $\tilde{\alpha}|_X = \alpha$, where $X=\tilde{\pi}^{-1}\tilde{0}$ for some preimage $\tilde{0}\in \tilde{B}$ of $0$. Let $j: X \hookrightarrow \tilde{\mathcal{X}}$ be the inclusion. Let $\beta\in\HH_n(X,\Z)$. Since $n$ is odd, we know that $\beta$ vahishes in $\HH_n(\PP^{n+1},\Z)$. By Lefschetz theory (see for example \cite{lam}), we know that $\beta$ is a vanishing class associated to the Lefschetz pencil $\mathcal{X}\longrightarrow B$. Then by (1) of Proposition \ref{prop global vc}, we see that $j_*\beta =0$ in $\HH_n(\tilde{\mathcal{X}},\Z)$. Thus 
\[
\langle\alpha,\beta\rangle = \langle j^*\tilde\alpha,\beta \rangle = \langle \tilde{\alpha}, j_*\beta \rangle = 0.
\]
This forces that $\alpha = 0$ since $\HH^n(X,R) = \Hom_{\Z}(\HH_n(X,\Z), R)$.
\end{proof}

\begin{cor}\label{cor non-alg}
Let $i: X\hookrightarrow \PP^{n+1}$ be a smooth hypersurface of odd dimension $n$ and let $S$ be an Enriques surface. Let $\alpha\in \HH^n(X,\Z)$ be an element not divisible by $2$ and let $u\in \HH^3(S,\Z)$ be the unique nonzero element. Then the torsion class $\alpha\otimes u\in \HH^{n+3}(X\times S,\Z)$ is non-extendable in $\mathcal{X}\times S \longrightarrow B$ for any Lefschetz pencil $\mathcal{X}\longrightarrow B$ containing $X$. If $X$ is very genery, then $\alpha\otimes u$ is not algebraic.
\end{cor}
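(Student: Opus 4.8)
The plan is to deduce Corollary \ref{cor non-alg} from Theorem \ref{thm non-ext} via a Künneth argument, after first reducing the coefficient ring from $\Z$ to $\Z/2\Z$. First I would observe that since $u$ generates $\HH^3(S,\Z) = \Z/2\Z$, the class $\alpha \otimes u$ is $2$-torsion and depends only on the image $\bar\alpha \in \HH^n(X,\Z/2\Z)$ of $\alpha$; more precisely, $\alpha \otimes u$ lies in the image of the natural map $\HH^n(X,\Z/2\Z) \otimes \HH^3(S,\Z/2\Z) \to \HH^{n+3}(X\times S, \Z/2\Z) \to \HH^{n+3}(X\times S, \Z)$, the last arrow being the Bockstein/inclusion $\HH^{n+3}(X\times S,\Z/2\Z) \hookrightarrow \HH^{n+3}(X\times S, \Z)$ onto the $2$-torsion subgroup (here I use that $\HH^*(X,\Z)$ is torsion free, so by the lemma above the mod-$2$ reduction of the relevant Künneth summand behaves well). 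Since $\alpha$ is not divisible by $2$, the reduction $\bar\alpha$ is nonzero in $\HH^n(X,\Z/2\Z)$.

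Next I would set up the degeneration: fix a Lefschetz pencil $\mathcal{X} \to B = \PP^1$ of hypersurfaces containing $X$ as a smooth fiber, and consider the family $\mathcal{X} \times S \to B$. A resolution of a base change of this family is simply (a resolution of the base change of $\mathcal{X}$) $\times\, S$, since $S$ is already smooth and the base change only affects the $\mathcal{X}$ factor; concretely, $\tilde{\mathcal{X}'} \times S$ resolves $(\mathcal{X}\times S)\times_B \tilde B = \mathcal{X}' \times S$. So extendability of $\alpha\otimes u$ in the family $\mathcal{X}\times S \to B$ would furnish a class $\widetilde{\alpha\otimes u} \in \HH^{n+3}(\tilde{\mathcal{X}'}\times S, R)$ restricting to $\alpha\otimes u$ on the fiber $X \times S$. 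The goal is to contradict the nonzero-ness of $\bar\alpha$.

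The key step is to contract the $S$ factor by pairing against homology. Let $\beta \in \HH_n(X,\Z/2\Z)$ be arbitrary and let $v \in \HH_3(S,\Z/2\Z)$ be dual to $u$; then $\beta \otimes v \in \HH_{n+3}(X\times S, \Z/2\Z)$, and $\langle \bar\alpha \otimes \bar u, \beta \otimes v\rangle = \langle\bar\alpha,\beta\rangle\langle\bar u, v\rangle = \langle \bar\alpha,\beta\rangle$. Reduce the extending class $\widetilde{\alpha\otimes u}$ modulo $2$ and push $\beta\otimes v$ into $\HH_{n+3}(\tilde{\mathcal{X}'}\times S, \Z/2\Z)$ via $j \times \mathrm{id}_S$. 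By the projection formula, $\langle\bar\alpha,\beta\rangle = \langle (j\times\mathrm{id})^* \overline{\widetilde{\alpha\otimes u}}, \beta\otimes v\rangle = \langle \overline{\widetilde{\alpha\otimes u}}, (j\times\mathrm{id})_*(\beta\otimes v)\rangle = \langle \overline{\widetilde{\alpha\otimes u}}, j_*\beta \otimes v\rangle$. Now since $n$ is odd, every $\beta \in \HH_n(X,\Z/2\Z)$ is a vanishing class for the Lefschetz pencil (it dies in $\HH_n(\PP^{n+1})$, so this follows from Lefschetz theory exactly as in the proof of Theorem \ref{thm non-ext}, working with $\Z/2\Z$-coefficients in Proposition \ref{prop global vc}(1) — the proof there goes through verbatim over any ring since it only uses integral vanishing), hence $j_*\beta = 0$ in $\HH_n(\tilde{\mathcal{X}'},\Z/2\Z)$, and therefore $j_*\beta\otimes v = 0$ in $\HH_{n+3}(\tilde{\mathcal{X}'}\times S,\Z/2\Z)$. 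Thus $\langle\bar\alpha,\beta\rangle = 0$ for all $\beta$, forcing $\bar\alpha = 0$ by the universal coefficient theorem, a contradiction. Non-algebraicity for very general $X$ is then immediate from the non-algebraicity criterion in the introduction.

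The main obstacle I anticipate is the bookkeeping in the reduction step: making precise the commutation between the Künneth decomposition, the mod-$2$ reduction map, and the "extendable" structure — i.e. checking that an integral extending class for $\alpha\otimes u$ really does reduce to a mod-$2$ extending class whose pairing with $j_*\beta\otimes v$ is computed by the integral vanishing of $j_*\beta$. One must be slightly careful that the relevant pairing $\HH^{n+3}(\tilde{\mathcal{X}'}\times S,\Z/2\Z) \times \HH_{n+3}(\tilde{\mathcal{X}'}\times S,\Z/2\Z)\to\Z/2\Z$ is compatible with the Künneth/external-product structure on $\tilde{\mathcal{X}'}\times S$, which it is, but writing this cleanly requires invoking the Künneth formula for $\tilde{\mathcal{X}'}\times S$ with $\Z/2\Z$ coefficients (no Tor issues there). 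Everything else — the identification of resolutions of the product family, the extension of Proposition \ref{prop global vc}(1) to $\Z/2\Z$ coefficients, and the final contradiction — is routine.
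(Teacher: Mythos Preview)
Your argument is correct and follows the same strategy as the paper: reduce modulo $2$, contract the $S$-factor by pairing against a class dual to $u$, and invoke the vanishing $j_*\beta=0$ from Proposition~\ref{prop global vc}. The paper packages the contraction step slightly more cleanly: rather than pairing the extending class against each $\beta\otimes v$ separately, it takes the slant product of $\bar\tau$ with the generator $u'\in\HH^1(S,\Z/2\Z)\cong\HH_3(S,\Z/2\Z)$ to produce directly a class $\tilde\alpha\in\HH^n(\tilde{\mathcal{X}},\Z/2\Z)$ restricting to $\bar\alpha$, and then applies Theorem~\ref{thm non-ext} as a black box instead of re-running its pairing argument inline as you do. This difference is purely organizational; the paper's route avoids having to re-justify the Lefschetz and universal-coefficient steps a second time. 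One small imprecision in your first paragraph: there is no natural map $\HH^{n+3}(X\times S,\Z/2\Z)\to\HH^{n+3}(X\times S,\Z)$ as you describe---what you actually need, and correctly use later, is only that the mod-$2$ reduction of $\alpha\otimes u$ equals $\bar\alpha\otimes\bar u$.
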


\begin{proof}
Let $\pi: \mathcal{X}\longrightarrow B=\PP^1$ be a Lefschetz pencil of hypersurfaces of dimension $n$ such that for some point $0\in B$ the corresponding fiber $X_0:=\pi^{-1}0\cong X$. Assume that $\alpha\otimes u$ is extendable. As in the above proof, there exist a smooth projective curve $\tilde{B}$, a fninite morphism $\tilde{B}\rightarrow B$, a resolution $\tilde{\mathcal{X}}$ of $\mathcal{X}':=\mathcal{X}\times_B \tilde{B}$, an identification $X=\tilde{\pi}^{-1}(\tilde{0})$ and a cohomology class $\tau \in \HH^{n+3}(\tilde{\mathcal{X}}\times S,\Z)$ such that $\tau|_{X\times S} = \alpha\otimes u$. Consider the class $\bar{\tau}\in \HH^{n+3}(\tilde{\mathcal{X}}\times S,\Z/2\Z)$ which is obtained from $\tau$ modulo 2. Let $u'\in \HH^1(S,\Z/2\Z)$ be the unique non-zero element which is associated to the $K3$ covering $\tilde{S}\longrightarrow S$. Then we get
\[
 \tilde{\alpha} := \bar{\tau}^* u' \in \HH^n(\tilde{\mathcal{X}},\Z/2\Z)
\]
which stisfies the following condition
\[
\tilde{\alpha}|_{X} = (\bar{\tau}^* u')|_{X} = (\bar{\tau}|_{X\times S})^*u' = (\bar\alpha \otimes \bar{u})^*u' = \bar\alpha
\]
where $\bar\alpha$ is the image of $\alpha$ in $\HH^n(X,\Z/2\Z)$ and $\bar{u}$ is the image of $u$ in $\HH^3(S,\Z/2\Z)$. The last equality uses the duality relation $\langle \bar{u}, u'\rangle =1$. It follows that $\bar{\alpha}\in \HH^n(X,\Z/2\Z)$ is extendable. By the above theorem, we have $\bar{\alpha} = 0$ and hence $\alpha$ is divisible by $2$ in $\HH^n(X,\Z)$. This give a contradiction.

It follows that $\alpha\otimes u$ is not algebraic for a very general memeber $X$ in a Lefschetz pencil. In particular, this holds for a very general $X$.
\end{proof}

\subsection{General case: hyperplane sections}
Let $Y$ be a smooth projective variety with a very ample line bundle $\calO_Y(1)$ which gives rise to an embedding $Y\hookrightarrow \PP^N$. The same argument as above gives the following.

\begin{thm}\label{thm general case}
Let $\pi:\mathcal{X}\rightarrow B=\PP^1$ be a Lefschetz pencil in $|\calO_Y(1)|$. Let $X=\pi^{-1}0$ be a smooth fiber and let $i: X\hookrightarrow Y$ be the embedding. Assume that $\dim Y = n+1$ where $n$ is an odd integer. Let $R$ be a nonzero commutative ring.

(1) If $\alpha\in \HH^n(X,R)$ is extendable, then we have
\[
\langle \alpha,\beta \rangle = 0,
\]
for all $\beta\in \HH_n(X,\Z)_{\mathrm{van}}:=\ker \{i_*: \HH_n(X,\Z)\longrightarrow \HH_n(Y,\Z)\}$. Furthermore, if $\HH_{n}(Y,\Z)$ vanishes and $\HH_{n-1}(Y,\Z)$ is torsion-free, then every nonzero element $\alpha \in \HH^n(X,R)$ is non-extendable.

(2) Let $S$ be an Enriques surface and $u\in \HH^3(S,\Z)$ be the unique nonzero element. Let $\alpha \in \HH^n(X,\Z)$. If $\alpha\otimes u$, viewed as an element in $\HH^{n+3}(X\times S,\Z)$, is extendable (in the family $\mathcal{X}\times S\rightarrow B$), then $\bar\alpha\in \HH^n(X,\Z/2\Z)$ is extendable.

(3) Assume that $\HH_n(Y,\Z)$ vanishes and that $\HH_{n-1}(Y,\Z)$ is torsion-free. If $X$ is very general in $|\calO_Y(1)|$, then for all $\alpha\in \HH^n(X,\Z)$ the class $\alpha\otimes u \in \HH^{n+3}(X,\Z)$ is not algebraic unless it is zero.
\end{thm}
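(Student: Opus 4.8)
The plan is to imitate the proofs of Theorem~\ref{thm non-ext} and Corollary~\ref{cor non-alg} almost verbatim, with $Y$ in place of $\PP^{n+1}$ and the kernel of $i_*:\HH_n(X,\Z)\to\HH_n(Y,\Z)$ in place of $\HH_n(\PP^{n+1},\Z)$ (which was automatically $0$). I would prove (1) and (2) directly and then deduce (3) formally from them together with the non-algebraicity criterion.

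For (1): if $\alpha\in\HH^n(X,R)$ is extendable, it is witnessed by a generically finite $\tilde B\to B$, a resolution $\tilde{\mathcal{X}}$ of $\mathcal{X}'=\mathcal{X}\times_B\tilde B$, an identification $X=\tilde\pi^{-1}(\tilde 0)$ with inclusion $j:X\hookrightarrow\tilde{\mathcal{X}}$, and $\tilde\alpha\in\HH^n(\tilde{\mathcal{X}},R)$ with $j^*\tilde\alpha=\alpha$; no special form of $\tilde{\mathcal{X}}$ is needed, since Proposition~\ref{prop global vc}(1) already handles an arbitrary resolution when $n$ is odd. Given $\beta\in\ker(i_*)$, Lefschetz theory for a pencil in $|\calO_Y(1)|$ (see \cite{lam}) tells us that $\ker(i_*)$ is generated over $\Z$ by classes of vanishing spheres, so $\beta$ is a vanishing class associated to $\mathcal{X}/B$; since $n$ is odd, Proposition~\ref{prop global vc}(1) gives $j_*\beta=0$, whence
\[
\langle\alpha,\beta\rangle=\langle j^*\tilde\alpha,\beta\rangle=\langle\tilde\alpha,j_*\beta\rangle=0,
\]
which is the first assertion. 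If moreover $\HH_n(Y,\Z)=0$, then $\ker(i_*)=\HH_n(X,\Z)$, so this pairing vanishes on all of $\HH_n(X,\Z)$; since $\HH_{n-1}(X,\Z)\cong\HH_{n-1}(Y,\Z)$ is torsion free by the Lefschetz hyperplane theorem, the universal coefficient theorem gives $\HH^n(X,R)\cong\Hom_{\Z}(\HH_n(X,\Z),R)$, and therefore $\alpha=0$.

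For (2): if $\alpha\otimes u$ is extendable in $\mathcal{X}\times S\to B$, then after base change by the witnessing $\tilde B\to B$ one has $(\mathcal{X}\times S)\times_B\tilde B=\mathcal{X}'\times S$, so $\tilde{\mathcal{X}}\times S$ is a resolution of $\mathcal{X}'\times S$ for any resolution $\tilde{\mathcal{X}}$ of $\mathcal{X}'$; by a standard common-resolution argument I may take the resolution in the extendability data of this form and obtain $\tau\in\HH^{n+3}(\tilde{\mathcal{X}}\times S,\Z)$ with $\tau|_{X\times S}=\alpha\otimes u$. Let $\bar\tau$ be its reduction modulo $2$ and $u'\in\HH^1(S,\Z/2\Z)$ the nonzero class attached to the $K3$ double cover of $S$. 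Viewing $\bar\tau$ as a cohomological correspondence from $S$ to $\tilde{\mathcal{X}}$, set
\[
\tilde\alpha:=\bar\tau^*u'=\mathrm{pr}_{\tilde{\mathcal{X}},*}(\bar\tau\cup\mathrm{pr}_S^*u')\in\HH^n(\tilde{\mathcal{X}},\Z/2\Z).
\]
Since $\bar\tau|_{X\times S}=\bar\alpha\otimes\bar u$ and $\langle\bar u,u'\rangle=1$ in $\HH^4(S,\Z/2\Z)$, the projection formula gives $\tilde\alpha|_X=\bar\alpha$, so $\bar\alpha\in\HH^n(X,\Z/2\Z)$ is extendable.

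For (3): assume $\HH_n(Y,\Z)=0$, $\HH_{n-1}(Y,\Z)$ torsion free, $X$ very general in $|\calO_Y(1)|$, and $\alpha\otimes u\ne0$; the latter is equivalent to $\alpha\notin 2\HH^n(X,\Z)$, i.e.\ to $\bar\alpha\ne0$ in $\HH^n(X,\Z/2\Z)$. If $\alpha\otimes u$ were algebraic, then, $X$ being a very general member of some Lefschetz pencil $\mathcal{X}\to B$ in $|\calO_Y(1)|$, the Remark following the definition of extendability would make $\alpha\otimes u$ extendable in $\mathcal{X}\times S\to B$; part (2) would then make $\bar\alpha$ extendable, and the ``furthermore'' in part (1) (applied with $R=\Z/2\Z$, using $\HH_n(Y,\Z)=0$ and $\HH_{n-1}(Y,\Z)$ torsion free) would force $\bar\alpha=0$, a contradiction. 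The step I expect to require the most care is the integral Lefschetz input that $\HH_n(X,\Z)_{\mathrm{van}}=\ker(i_*)$ is spanned over $\Z$ by classes of vanishing spheres (the statement over $\Q$ is classical; the integral one is in \cite{lam}); given Proposition~\ref{prop global vc}, everything else is formal.
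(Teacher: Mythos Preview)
Your proposal is correct and follows essentially the same route as the paper: part~(1) via Lefschetz theory plus Proposition~\ref{prop global vc}(1) and the universal coefficient theorem, part~(2) via the correspondence $\bar\tau^*u'$ exactly as in Corollary~\ref{cor non-alg}, and part~(3) as the formal combination of (1), (2), and the non-algebraicity criterion. Your treatment is in fact slightly more careful than the paper's in two places: you explicitly flag the integral (as opposed to rational) statement that $\ker(i_*)$ is spanned by vanishing spheres, and in (2) you justify via a common-resolution argument why the extension data may be taken of the product form $\tilde{\mathcal{X}}\times S$, a point the paper leaves implicit.
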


\begin{proof}
We will use the notations $\tilde{B}$, $\mathcal{X}'$, $\tilde{\mathcal{X}}$, $j: X=\tilde{\pi}^{-1}\tilde{0} \hookrightarrow \tilde{\mathcal{X}}$ as in the previous proofs. 

(1) If $\alpha$ is extendable, then there exists $\tilde{\alpha}\in \HH^n(\tilde{\mathcal{X}}, R)$ such that $\alpha = j^*\tilde{\alpha}$. Then we again have
\[
\langle \alpha,\beta\rangle = \langle j^*\tilde\alpha,\beta\rangle = \langle \alpha,j_*\beta\rangle =0,
\]
since $j_*\beta = 0$ for all $\beta\in \HH_n(X,\Z)_{\mathrm{van}}$ by Lefschetz theory and (1) of Proposition \ref{prop global vc}. Assume that $\HH_n(Y,\Z) = 0$, then we have
\[
\HH^n(X,\Z)_{\mathrm{van}} = \HH_n(X,\Z).
\]
If $\HH_{n-1}(Y,\Z)$ is torsion free, then by Lefschetz hyperplane theorem, we know that $\HH_{n-1}(X,\Z)$ is also torsion free. Then the universal coefficient theorem for cohomology becomes
\[
\HH^n(X,R) = \Hom_{\Z}(\HH_n(X,\Z), R).
\]
Thus the vanishing of $\langle\alpha,\beta\rangle = 0$ for all $\beta \in \HH_n(X,\Z)$ implies $\alpha = 0$ in $\HH^n(X,R)$.

(2) and (3): the proof is the same as that of Corollary \ref{cor non-alg}. One only needs to note that ,under the assumptions of (3), the group $\HH^{n+1}(X,\Z)$ is also torsion free by Poincar\'e duality. Thus the universal coefficient theorem implies
\[
 \HH^n(X,\Z/2\Z) = \HH^n(X,\Z)\otimes \Z/2\Z.
\]
Thus $\bar\alpha =0 $ in $\HH^n(X,\Z/2\Z)$ if and only if $\alpha\otimes u =0$ in $\HH^{n+3}(X\times S,\Z)$ since the K\"unneth formula gives
\[
 \HH^n(X,\Z)\otimes \HH^(S,\Z) \hookrightarrow \HH^{n+3}(X\times S,\Z)
\]
and $\HH^3(S,\Z) = \Z/2\Z$. Then (3) follows from (1) and (2)
\end{proof}


\begin{thebibliography}{VC}
\bibitem{ah} M.~Atiyah and F.~Hirzebruch, \textit{Analytic cycles on complex manifolds}, Topology \textbf{1} (1962), p. 25--45.
\bibitem{bo} O.~Benoist and J.~Ottem, \textit{Failure of the integral Hodge conjecture for threefolds of Kodaira dimension zero}, Commentarii Mathematici Helvetici, to appear.
\bibitem{ct} J.-L.~Colliot-Th\'el\`ene, \textit{Cohomologie non ramifi\'ee dans le produit avec une courbe elliptique}, preprint, 2018.
\bibitem{lam} K.~Lamotke, \textit{The topology of complex projective varieties after S.~Lefschetz}, Topology \textbf{50} (1981), p. 15--51.
\end{thebibliography}
\end{document}